\theoremstyle{plain}
\newtheorem{Thm}{Theorem}[section]
\newtheorem{Lem}[Thm]{Lemma}
\newtheorem{Cor}[Thm]{Corollary}
\newtheorem{Prop}[Thm]{Proposition}
\theoremstyle{definition}
\newtheorem{Def}[Thm]{Definition}
\newtheorem{Def-Lem}[Thm]{Definition-Lemma}
\newtheorem{Rem}[Thm]{Remark}
\newtheorem*{Ack}{Acknowledgments}
\theoremstyle{remark}
\newcommand{\Coker}{\operatorname{Coker}}
\newcommand{\Proj}{\operatorname{Proj}}
\newcommand{\prt}{\partial}
\newcommand{\Sing}{\operatorname{Sing}}
\newcommand{\Spec}{\operatorname{Spec}}
\newcommand{\mbA}{\mathbb{A}}
\newcommand{\mbC}{\mathbb{C}}
\newcommand{\mbP}{\mathbb{P}}
\newcommand{\mbZ}{\mathbb{Z}}
\newcommand{\mcL}{\mathcal{L}}
\newcommand{\mcM}{\mathcal{M}}
\newcommand{\mcO}{\mathcal{O}}
\newcommand{\mcQ}{\mathcal{Q}}
\newcommand{\mcX}{\mathcal{X}}
\newcommand{\mfm}{\mathfrak{m}}
\newcommand{\msp}{\mathsf{p}}
\newcommand{\msq}{\mathsf{q}}
\newcommand{\K}{\Bbbk}
\newcommand{\inj}{\hookrightarrow}
\newcommand{\CH}{\operatorname{CH}}
\title[Stable rationality of cyclic covers]{Stable rationality of cyclic covers \\ of projective spaces}
\author[Takuzo Okada]{Takuzo Okada}
\address{Department of Mathematics, Faculty of Science and Engineering\endgraf
Saga University, Saga 840-8502 Japan}
\email{okada@cc.saga-u.ac.jp}
\subjclass[2010]{Primary 14E08 ; Secondary 14C15, 14G17.}
\keywords{Stable rationality; cyclic cover}
\date{}
\begin{document}

\begin{abstract}
The main aim of this paper is to show that a cyclic cover of $\mbP^n$ branched along a very general divisor of degree $d$ is not stably rational provided that $n \ge 3$ and $d \ge n+1$.
This generalizes the result of Colliot-Th\'el\`ene and Pirutka.
Generalizations for cyclic covers over complete intersections and applications to suitable Fano manifolds are also discussed.
\end{abstract}

\maketitle


\section{Introduction}

In this paper we study stable rationality of cyclic covers of various varieties.
We first explain known non-rationality results on cyclic covers of $\mbP^n$.
Koll\'ar \cite{Kol1} proved that a wide range of cyclic covers over $\mbP^n$ is not rational and, for some of them, it is moreover proved that they are not birational to a conic bundle.  
Although the non-rationality result overlaps the avobe mentioned Koll\'ar's result, there are some results which focus more on birational nature of cyclic coverings.
It is proved in e.g.\ \cite{Puk}, \cite{Chel2} that a double cover of $\mbP^n$ branched along a divisor of degree $2n$ is birationally superrigid  for $n \ge 3$, which in particular implies that the variety is not birational to a Mori fiber space other than itself. 
It is also proved in \cite{Chel1} that a triple cover of $\mbP^{2n}$ branched along a divisor of degree $3n$ is birationally superrigid for $n \ge 2$.

Recently, non-stable-rationality of a double cover of $\mbP^3$ branched along a very general quartic is proved by Voisin \cite{Voisin} by introducing a new specialization method.
Beauville \cite{Beau} proved that a double cover of $\mbP^3$ branched along a very general sextic is not stably rational.
The idea has been amplified and applied to many varieties other than cyclic covers such as hypersurfaces \cite{CTP1,Totaro}, conic bundles \cite{HKT}, Fano $3$-folds and del Pezzo fibrations \cite{HT}, and quadric surface bundles \cite{HPT}.

It is proved by Colliot-Th\'el\`ene and Pirutka \cite{CTP2} that a cyclic cover of $\mbP^n$ branched along a very general divisor of degree $d$ is not stably rational if $d \ge n+1$, $n \ge 3$ and the covering degree is a prime number.
The first aim of this paper is to drop the assumption on the covering degree.

\begin{Thm} \label{mainthm}
A cyclic cover of $\mbP^n$ defined over $\mbC$ branched along a very general divisor of degree $d$ is not stably rational provided that $n \ge 3$ and $d \ge n+1$.
If in addition $d \ge n+2$ and the covering degree is not a power of $2$, then it is not birational to a conic bundle.
\end{Thm}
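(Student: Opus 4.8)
The plan is to run the specialization (degeneration) method for stable rationality of Voisin \cite{Voisin} and Colliot-Th\'el\`ene--Pirutka \cite{CTP1,CTP2}. Write $m$ for the covering degree, so that $m \mid d$, and realize the $m$-fold cyclic covers $X \to \mbP^n$ branched along a degree-$d$ divisor $B = (f = 0)$ as the hypersurfaces $w^m = f$ in the total space of $\mcO_{\mbP^n}(d/m)$; these fit into a family over $\mbP H^0(\mbP^n, \mcO_{\mbP^n}(d))$. By the specialization theorem it suffices, for each relevant triple $(m, n, d)$, to produce a single branch divisor $B_0 = (f_0 = 0)$ such that the (singular) cover $X_0 \colon w^m = f_0$ admits a projective resolution $\pi \colon \widetilde X_0 \to X_0$ that is universally $\CH_0$-trivial (equivalently, all of whose fibres are universally $\CH_0$-trivial) while $\widetilde X_0$ is \emph{not} universally $\CH_0$-trivial; the latter will be certified by a non-zero class $\alpha \in H^i_{\mathrm{nr}}(\mbC(X_0), \mbZ/\ell)$ for a suitable prime $\ell \mid m$ and some $i \ge 2$, with $i = n$ and $\ell$ odd in the situation of the second assertion.

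The first step is a reduction to the case of a prime-power covering degree. Fix a prime $\ell \mid m$ — odd for the second assertion, which is possible exactly when $m$ is not a power of $2$ — and write $m = \ell^r m'$ with $\gcd(\ell, m') = 1$; note $\ell^r \mid m \mid d$. The $m$-fold cover $w^m = f$ maps to the $\ell^r$-fold cover $w_1^{\ell^r} = f$ via $w_1 = w^{m'}$, a degree-$m'$ extension of function fields, and since $m'$ is prime to $\ell$ the pullback on $H^i_{\mathrm{nr}}(-,\mbZ/\ell)$ is split injective by restriction--corestriction; hence a branch divisor that works for the $\ell^r$-fold cover works for the $m$-fold cover as well, and it is enough to treat covering degree $\ell^r$. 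For $r = 1$ this is precisely the theorem of \cite{CTP2}. For $r \ge 2$ I would argue by induction on $r$: an $\ell^r$-fold cyclic cover $w^{\ell^r} = f$ of $\mbP^n$ lies over the $\ell$-fold cyclic cover $Y_0 \colon v^\ell = f$ as the $\ell^{r-1}$-fold cyclic cover $w^{\ell^{r-1}} = v$ of $Y_0$ branched along the ramification divisor of $Y_0 \to \mbP^n$, so one is led to consider cyclic covers not of $\mbP^n$ but of such $Y_0$ — this is exactly why the paper also develops cyclic covers over complete intersections: the induction has to run inside a class of ambient varieties closed under taking cyclic covers.

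The main obstacle is the prime-power step, specifically the control of the resolution. Starting from (a degeneration of) the Colliot-Th\'el\`ene--Pirutka example, for which the $\ell$-fold cover carries an explicit unramified class $\alpha$, one must do two things. First, check that $\alpha$ pulls back to a non-zero class on the $\ell^r$-fold cover $X_0$ that is still unramified everywhere: here the naive restriction--corestriction argument is unavailable because the tower $X_0 \to \cdots \to \mbP^n$ has $\ell$-power degree, so one has to read this off the explicit symbol shape of $\alpha$ together with the chosen form of $f_0$ (it must be arranged that the functions occurring in $\alpha$ do not become $\ell$-th powers along the tower). Second — and this is the real work — resolve $X_0$ by a morphism all of whose fibres are universally $\CH_0$-trivial. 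Because $m = \ell^r$ is composite, along the ramification locus over $\Sing(B_0)$ the cover acquires toric (cyclic-quotient type) singularities, locally of the form $w^{\ell^r} = (\text{unit}) \cdot u_1^{a_1} \cdots u_k^{a_k}$ with $\{u_i = 0\}$ a simple normal crossing divisor and $0 < a_i < \ell^r$, strictly more degenerate than in the prime case treated by \cite{CTP2}. I would therefore choose $f_0$ generic subject to $\Sing(B_0)$ being a fixed linear subspace so that these local models are toric, resolve them by an explicit sequence of (weighted) blow-ups of smooth toric centres — whose exceptional fibres are rational, hence universally $\CH_0$-trivial — and then verify that $\alpha$ stays unramified along the new exceptional divisors. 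The inequalities $d \ge n + 1$, respectively $d \ge n + 2$, are precisely what leave enough freedom in $f_0$ to realize the Artin--Mumford-type non-vanishing in cohomological degree $2$, respectively to push a surviving unramified class up to top degree $n$, while keeping $\Sing(X_0)$ this mild.

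Finally, the conic-bundle statement. The specialization method (in the refined form used in \cite{CTP2}) transports the non-zero class in $H^n_{\mathrm{nr}}(-, \mbZ/\ell)$, with $\ell$ odd, from $X_0$ to the very general member $X$. Suppose $X$ were birational to a conic bundle $V \to W$ with $\dim W = n - 1$; then $\mbC(X) = \mbC(W)(C)$ for a conic $C$ over $F := \mbC(W)$, which has a closed point of degree $2$. Since $\ell$ is odd, the residue exact sequence for $C$ over $F$ (the correction term present when $C$ has no rational point is $2$-torsion, hence trivial with $\mbZ/\ell$-coefficients) shows that every class in $H^n_{\mathrm{nr}}(\mbC(X), \mbZ/\ell)$ comes from $H^n(F, \mbZ/\ell)$; but $F$ has transcendence degree $n - 1$ over $\mbC$, so $\operatorname{cd}_\ell(F) = n - 1 < n$ and $H^n(F, \mbZ/\ell) = 0$. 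Hence the class vanishes, contradicting its non-triviality on $X$, and $X$ is not birational to a conic bundle.
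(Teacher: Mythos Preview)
Your approach diverges fundamentally from the paper's, and the divergence is where the gap lies. The paper (and already \cite{CTP2} for prime covering degree) does \emph{not} produce unramified cohomology classes over $\mbC$. Instead it degenerates to an algebraically closed field $\K$ of characteristic $p \mid m$ and works with Koll\'ar's injection $\pi^*\mcQ(\mcL,s) \hookrightarrow (\Omega_X^{n-1})^{\vee\vee}$ for the \emph{inseparable} cover $X = Z[\sqrt[m]{s}]$; the hypothesis $d \ge n+1$ makes $\mcQ(\mcL,s) \cong \mcO_Z(d-n-1)$ effective, and the heart of the paper (Proposition~\ref{prop:resol}) is a hands-on resolution of the isolated singularities of $X$---classified via admissible critical points of $s$---by a chain of blow-ups whose exceptional divisors are rational and along which the local generator $\eta_{\msp}$ lifts to a regular $(n-1)$-form. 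Totaro's criterion then gives failure of universal $\CH_0$-triviality. The conic-bundle statement is obtained not from $H^n_{\mathrm{nr}}$ but from bigness of the same subsheaf of $\Omega_Y^{n-1}$ when $d \ge n+2$ and $p$ is odd, via Koll\'ar's argument in \cite{Kol2}.

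Your proposal instead seeks an explicit class $\alpha \in H^i_{\mathrm{nr}}(\mbC(X_0),\mbZ/\ell)$, but never constructs one. The prime case you defer to \cite{CTP2} yields no such class: that paper already uses the positive-characteristic differential-forms method, so there is no symbol to pull up your tower. The reduction to prime-power degree by restriction--corestriction is fine, but the prime-power step is where the argument stalls: you correctly note that the transfer argument fails for $\ell$-power towers and say one must ``read this off the explicit symbol shape of $\alpha$'', yet no $\alpha$ has been exhibited for arbitrary $(n,d)$ with $d \ge n+1$. The resolution of $w^{\ell^r}=f_0$ is likewise only sketched, and the crucial verification---that $\alpha$ remains unramified along every exceptional divisor---is asserted rather than checked. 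Your conic-bundle paragraph is formally sound but vacuous without the missing $H^n_{\mathrm{nr}}$ class. In short, the unramified-cohomology strategy you outline is not known to produce the needed invariants in this generality, and the paper's positive-characteristic route bypasses the need for them entirely.
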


The main part of the proof is to prove the existence of a ``good" resolution of singularities of an inseparable cyclic covering space defined over an algebraically closed field of positive characteristic $p$, which will be discussed in Section \ref{sec:resol}.
Existence of such a good resolution is known for some restricted cases, for example, the case when the covering degree is $p$ (see \cite{CTP2}).
In this paper, we establish the existence of such a good resolution in full generality, that is, for inseparable cyclic covering of arbitrary degree over an arbitrary nonsingular variety (see Proposition \ref{prop:resol}).

The existence result of a good resolution in a general setting enables us to work with cyclic covers over varieties other than $\mbP^n$ and provides us with a simple proof of non-stable-rationality of many cyclic covers whose covering degrees are not too small.
The following generalization of Theorem \ref{mainthm} is motivated by the very recent work of Chatzistamatiou and Levine \cite{CL}, where, among other deep results, it is in particular proved that a very general Fano complete intersection of index $1$ is not stably rational.

\begin{Thm} \label{mainthmgen}
Let $X$ be a cyclic cover of a very general complete intersection of type $(d_1,\dots,d_r)$ in $\mbP^{n+r}$ branched along a very general divisor of degree $d \ge 2$ such that $n \ge 3$ and $d + d_1 + \cdots + d_r \ge n+r+1$.
Then $X$ is not stably rational.
If in addition $d + d_1 + \cdots + d_r \ge n+r+2$ and the covering degree is not a power of $2$, then $X$ is not birational to a conic bundle.
\end{Thm}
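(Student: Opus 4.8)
The plan is to combine the specialization method with the existence of a good resolution (Proposition~\ref{prop:resol}), exactly in the spirit of \cite{CTP2}. Let $m\ge 2$ be the covering degree and fix a prime $p\mid m$. The pairs $(V,B)$ --- with $V\subset\mbP^{n+r}$ a complete intersection of type $(d_1,\dots,d_r)$ and $B\subset V$ a divisor of degree $d$ --- are parametrized by a scheme $\mcT$ of finite type over $\mbZ$, and the associated cyclic covers form a family $\mcX\to\mcT$. By the specialization theorem of Voisin and Colliot-Th\'el\`ene--Pirutka \cite{Voisin,CTP1} applied to a model of this family over a discrete valuation ring with residue characteristic $p$ and fraction field of characteristic $0$, it suffices to exhibit one cyclic cover $X_0$ of the prescribed type over $\overline{\mbF}_p$ admitting a proper universally $\CH_0$-trivial resolution $\widetilde X_0\to X_0$ with $\widetilde X_0$ itself \emph{not} universally $\CH_0$-trivial; this forces the very general member over $\mbC$ to admit no integral decomposition of the diagonal, hence to be not retract rational, a fortiori not stably rational. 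I would take $X_0$ to be the cyclic cover of a very general complete intersection $V/\overline{\mbF}_p$ branched along a very general $B$, so that $V$ and $B$ are nonsingular.

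Next I would produce the good resolution. Write $m=p^{a}m'$ with $p\nmid m'$. In characteristic $p$ the $\mbZ/m$-cover degenerates, and $X_0\to V$ factors as $X_0\to Y_0\to V$, where $Y_0\to V$ is the (separable, tame) $m'$-cyclic cover branched along $B$ and $X_0\to Y_0$ is the purely inseparable $p^{a}$-cyclic cover obtained by adjoining a $p^{a}$-th root of a function on $Y_0$ whose $m'$-th power is the pullback of the defining section of $B$. Since $B$ is nonsingular and $p\nmid m'$, the variety $Y_0$ is nonsingular, so $X_0\to Y_0$ is an inseparable cyclic covering of a nonsingular variety and Proposition~\ref{prop:resol} applies, yielding a resolution $\widetilde X_0\to X_0$ that is universally $\CH_0$-trivial. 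The map $X_0\to Y_0$ is a finite universal homeomorphism, so $\widetilde X_0$ is birational to a purely inseparable cover of $Y_0$ --- over $\mbZ[1/p]$-coefficients this would be inconsequential, and the whole point is to detect the $p$-torsion that such a cover can introduce in $\CH_0$.

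The heart of the argument is to show that $\widetilde X_0$ is not universally $\CH_0$-trivial, which I would do by producing a nonzero unramified class $\alpha\in H^{2}_{\mathrm{nr}}\bigl(\overline{\mbF}_p(\widetilde X_0)/\overline{\mbF}_p,\,\mbZ/p\bigr)$ --- a class defined in characteristic $p$ through logarithmic de Rham--Witt forms (the Kato complex), equivalently a nontrivial $p$-torsion class in the unramified Brauer group. Over the function field $L$ of $Y_0$, inside the purely inseparable extension of $L$ carrying the extracted $p$-th root, one builds $\alpha$ as the cup product of the $d\log$-class of that root with a class pulled back from $V$, the prototype on $V$ being the symbol $\{f\}\cup\{\ell\}$ with $\ell$ a general linear form. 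Checking that $\alpha$ is unramified along every prime divisor of $\widetilde X_0$ uses the explicit structure of the exceptional locus given by Proposition~\ref{prop:resol}; the one residue that must be cancelled, along the ramification divisor of the covering, turns out after the computation to be a section of $\omega_V\otimes\mcO_V(d)=\mcO_V(d+d_1+\cdots+d_r-n-r-1)$. Such a section exists --- and can be arranged so that $\alpha\ne 0$ --- exactly when $d+d_1+\cdots+d_r\ge n+r+1$, which is the hypothesis; nonvanishing of $\alpha$ over the ground field rests on the same degree count together with $n\ge 3$ (the obstruction must live in dimension at least three). This is the step I expect to be the main obstacle: one must follow the ramification of the candidate class through the good resolution of an inseparable cover that can be quite singular when $p^{a}$ is large, and pin the threshold for a nonzero unramified class down to the stated inequality.

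For the conic bundle assertion, suppose in addition that $m$ is not a power of $2$ and choose the prime $p\mid m$ above to be odd. A smooth projective variety birational to a conic bundle over a rational base has trivial unramified cohomology with $\mbZ/p$-coefficients in positive degrees for every odd $p$, so it is enough to exhibit a class $\alpha$ as above which is genuinely nontrivial and, for $p$ odd, not absorbed by such a structure --- note that when $p$ is odd the resolution of the inseparable $p$-cover is not itself a conic bundle (unlike the $p=2$ case). The gap between the two numerical thresholds reflects exactly this: the stronger inequality $d+d_1+\cdots+d_r\ge n+r+2$ guarantees that the residue computation leaves room for a \emph{non-constant} section of $\mcO_V(d+d_1+\cdots+d_r-n-r-1)$, so that $\alpha$ is not of the degenerate (symbol-from-a-point) shape that a conic bundle would permit. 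Specialization then yields that the very general $X$ over $\mbC$ is not birational to a conic bundle.
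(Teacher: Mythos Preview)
Your overall framework (specialization to characteristic $p$, then a good resolution via Proposition~\ref{prop:resol}) is correct, but you miss the actual obstruction the paper uses, and your substitute is not worked out.

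Proposition~\ref{prop:resol} does more than produce a universally $\CH_0$-trivial resolution: it also guarantees that the local generator $\eta_{\msp}$ of Koll\'ar's line bundle $\mcM=\pi^*\mcQ(\mcL,s)\hookrightarrow(\Omega_X^{n-1})^{\vee\vee}$ lifts to a \emph{regular} section of $\Omega_Y^{n-1}$. Since $\mcQ(\mcL,s)\cong\omega_Z\otimes\mcL^m\cong\mcO_Z(d+d_1+\cdots+d_r-n-r-1)$ on the complete intersection $Z$, the hypothesis $d+d_1+\cdots+d_r\ge n+r+1$ gives $H^0(Y,\Omega_Y^{n-1})\ne 0$, and Totaro's criterion (Theorem~\ref{thm:totaro}) immediately yields that $Y$ is not universally $\CH_0$-trivial. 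That is the entire argument; no unramified cohomology class is constructed or needed. You correctly spot that $\omega_V\otimes\mcO_V(d)$ is the relevant line bundle, but you interpret it as housing a residue for a Brauer-type symbol rather than as a subsheaf of $\Omega^{n-1}$ on the resolution. Your proposed class $\alpha\in H^2_{\mathrm{nr}}(\,\cdot\,,\mbZ/p)$ is left vague at precisely the point that matters, and you yourself flag it as the main obstacle; the paper simply does not go down this road.

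Two smaller points. The factorization $X_0\to Y_0\to V$ into separable and inseparable pieces is unnecessary: the setup of Section~\ref{sec:sing} treats $Z[\sqrt[m]{s}]$ directly for any $m$ with $p\mid m$, so one applies Proposition~\ref{prop:resol} with $Z=V$ and verifies admissible critical points for $s$ on $V$ (using \cite[Lemma 7.6]{CL} when $d=2$). For the conic bundle statement, the paper's argument is again via differential forms: under the stronger inequality $\varphi^*\mcM\subset\Omega_Y^{n-1}$ is \emph{big}, one chooses an odd prime $p\mid m$, and then invokes Koll\'ar's argument \cite[V.5.14]{Kol2}. Your claim that conic bundles over rational bases have trivial odd-torsion unramified cohomology in all positive degrees is not what is used and would require justification.
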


%

We explain a further application.

\begin{Thm} \label{mainthm2}
Let $n \ge 3, a, b, d$ be positive integer satisfying the following properties.
\begin{enumerate}
\item $a$ and $b$ are coprime.
\item $d$ is divisible by both $a$ and $b$, and $d/a, d/b \ge 2$. 
\item There is a prime number $p$ which divides $d/b$ but not $d/a$.
\item $n+a \le d < n + a + b$.
\end{enumerate}
Then a very general weighted hypersurface $X$ of degree $d$ in $\mbP (1^n,a,b)$, defined over $\mbC$, is a nonsingular Fano $n$-fold which is not stably rational. 
\end{Thm}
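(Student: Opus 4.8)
The plan is to (i) check that $X$ is a nonsingular Fano $n$-fold, (ii)--(iii) exhibit $X$ as a finite covering of a weighted projective space which, after reduction modulo $p$, degenerates to an inseparable cyclic covering of the type handled by Proposition \ref{prop:resol}, and (iv) run the non-universal-$\CH_0$-triviality computation of Theorems \ref{mainthm}--\ref{mainthmgen} over the resolved weighted base and specialize. For (i), write $\mbP=\mbP(1^n,a,b)$ with coordinates $x_1,\dots,x_n$ of weight $1$, $y$ of weight $a$, $z$ of weight $b$. Since $a\mid d$ and $b\mid d$, the monomials $x_i^d$, $y^{d/a}$, $z^{d/b}$ all have degree $d$, so $|\mcO_{\mbP}(d)|$ is base point free and a general $X$ is quasi-smooth by Bertini, while these monomials keep the two coordinate points $P_y$ (where only $y\ne 0$) and $P_z$ (where only $z\ne 0$) off $X$. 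Because $\gcd(a,b)=1$, the singular locus of $\mbP$ is exactly $\{P_y,P_z\}$ and contains no positive-dimensional stratum, so a general $X$ is nonsingular; by adjunction $-K_X=\mcO_X(n+a+b-d)$, which is ample by the upper bound in (4), so $X$ is Fano. This step uses (1), the divisibility statements in (2), and the upper bound in (4).

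For (ii)--(iii): projection away from $P_z$ realizes $X$ as a finite covering $X\to\mbP(1^n,a)$ of degree $d/b$, which is $\ge 2$ by (2), and on the subfamily of hypersurfaces of the form $z^{d/b}=g(x_1,\dots,x_n,y)$, with $g$ of degree $d$ on $\mbP(1^n,a)$, this is the cyclic covering branched along $\{g=0\}$. This subfamily is not dense over $\mbC$, but one may choose a discrete valuation ring of mixed characteristic with residue field $\overline{\mbF}_p$ and a model over it whose generic fibre is a very general $X$ and whose special fibre is $X_0=\{z^{d/b}=g\}$ over $\overline{\mbF}_p$ with $g$ very general. Writing $d=abk$ (forced by (1) and (2)), condition (3) is equivalent to the existence of a prime $p$ with $p\mid a$ and $p\nmid k$, so that $v_p(d/b)=v_p(a)\ge 1$ while $d/a$ is prime to $p$; hence $X_0\to\mbP(1^n,a)_{\overline{\mbF}_p}$ is an inseparable cyclic covering of degree $d/b$. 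After replacing $\mbP(1^n,a)$ by a resolution and pulling the covering back, Proposition \ref{prop:resol} supplies a good resolution $\widetilde{X_0}\to X_0$.

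For (iv): the lower bound in (4) reads $d\ge n+a$, and $-K_{\mbP(1^n,a)}=\mcO(n+a)$, so this is the weighted analogue of the hypothesis $d\ge n+1$ in Theorem \ref{mainthm}; combined with the rationality of $\mbP(1^n,a)$, it is exactly the input needed to repeat — with $\mbP^n$ replaced by (a resolution of) $\mbP(1^n,a)$ — the computation from the proofs of Theorems \ref{mainthm}--\ref{mainthmgen} showing that a good resolution of such an inseparable cyclic covering is not universally $\CH_0$-trivial. The specialization theorem for universal $\CH_0$-triviality (cf.\ \cite{Voisin,CTP2}), applied to the chosen family, then transfers this to the very general fibre $X$ over $\mbC$, which is therefore not stably rational.

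I expect the main obstacle to be step (iv): transporting the non-universal-$\CH_0$-triviality computation of Theorems \ref{mainthm}--\ref{mainthmgen}, which takes place over the smooth base $\mbP^n$, to a resolution of the singular weighted projective space $\mbP(1^n,a)$, and verifying that the interplay between this resolution and the good resolution of the inseparable covering produced by Proposition \ref{prop:resol} leaves the obstruction intact — the numerical bookkeeping ought to collapse precisely to the inequality $d\ge n+a$. The remaining hypotheses then play purely bookkeeping roles: $\gcd(a,b)=1$ and $d/a\ge 2$ keep $\mbP$ well-formed and $X$ quasi-smooth, $d/b\ge 2$ makes the covering nontrivial, the upper bound in (4) makes $X$ Fano, and the prime $p$ furnished by (3) is what forces the degree-$(d/b)$ covering onto $\mbP(1^n,a)$ — rather than some unrelated structure — to degenerate modulo $p$ to an inseparable covering of the type resolved by Proposition \ref{prop:resol}.
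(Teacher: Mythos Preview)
Your overall strategy matches the paper's --- specialize to characteristic $p$ with $p \mid d/b$, realize the special fiber $X_0 = \{z^{d/b} + f = 0\}$ as an inseparable cyclic cover of $Z = \mbP(1^n,a)$, apply Proposition \ref{prop:resol}, and use $\mcM \cong \mcO_{X_0}(d-n-a)$ together with $d \ge n+a$ --- but you miss the observation that dissolves the obstacle you flag in step (iv). You propose to resolve $Z$ and pull back the covering, which creates real complications: the pullback is only birational to $X_0$; one must produce an $m$-th-root line bundle on the resolution even though $\mcO_Z(b)$ is not Cartier at the vertex $P_y$; and one must separately verify universal $\CH_0$-triviality of the composite over the new exceptional locus. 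The paper does \emph{not} resolve $Z$. Instead it observes that $X_0$ is already nonsingular at every point lying over $P_y = \Sing Z$: one may take $f$ to contain $y^{d/a}$ with nonzero coefficient, and since $p \nmid d/a$ by (3), the partial $\prt f/\prt y$ does not vanish where all $x_i = 0$; hence $X_0$ is quasi-smooth there, and since $\gcd(a,b)=1$ these points avoid $\Sing \mbP(1^n,a,b)$. This is the genuine role of the second half of hypothesis (3), not bookkeeping. One then applies Proposition \ref{prop:resol} only over the smooth locus $Z^\circ$, where $X_0^\circ = Z^\circ[\sqrt[m]{f}]$; the resulting resolution $Y \to X_0$ is an isomorphism over $X_0 \setminus X_0^\circ$, and $\mcM$ extends to a line bundle on all of $X_0$ precisely because $X_0$ is smooth there.

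A small correction on the remaining hypotheses: $d/a \ge 2$ is not about well-formedness. Its role is to make the restriction $H^0(Z,\mcO_Z(d)) \to \mcO_Z(d) \otimes \mcO_{Z,\msq}/\mfm_{\msq}^3$ surjective at each $\msq \in Z^\circ$, so that a general $f$ has only admissible critical points on $Z^\circ$; when $m = 2$ (so $p = 2$) one needs surjectivity onto $\mcO_{Z,\msq}/\mfm_{\msq}^4$ and hence $d/a \ge 3$, which (3) supplies since it then forces $d/a$ to be odd.
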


In dimension $3$, this recovers the result of \cite{HT} for Fano $3$-folds  $X_6 \subset \mbP (1,1,1,2,3)$, where the subscript indicates the degree.
By \cite{BK}, nonsingular Fano $4$-fold weighted hypersurfaces of index $1$ consist of $4$ families, $X_5 \subset \mbP^5$, $X_6 \subset \mbP (1^5,2)$, $X_8 \subset \mbP (1^5,4)$ and $X_{10} \subset \mbP (1^4,2,5)$, where the non-stable-rationality of $X_5 \subset \mbP^5$ is proved in \cite{Totaro}.
Following is a consequence of Theorems \ref{mainthm} and \ref{mainthm2}.

\begin{Cor} \label{maincor2}
A very general nonsingular Fano $4$-fold weighted hypersurface of index $1$ is not stably rational.
\end{Cor}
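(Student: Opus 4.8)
The plan is to reduce Corollary \ref{maincor2} to the already-established Theorems \ref{mainthm} and \ref{mainthm2} by going through the classification of nonsingular Fano $4$-fold weighted hypersurfaces of index $1$ recalled in the discussion above, namely the four families $X_5 \subset \mbP^5$, $X_6 \subset \mbP(1^5,2)$, $X_8 \subset \mbP(1^5,4)$ and $X_{10} \subset \mbP(1^4,2,5)$. The case $X_5 \subset \mbP^5$ is cited from \cite{Totaro}, so only three families remain.

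First I would dispose of $X_6 \subset \mbP(1^5,2)$ and $X_8 \subset \mbP(1^5,4)$ by realizing them as cyclic covers of $\mbP^4$ and applying Theorem \ref{mainthm} with $n = 4$. Writing the weighted coordinates as $x_0,\dots,x_4$ of weight $1$ and $y$ of weight $2$ (resp.\ $4$), a general member of degree $6$ can be taken in the form $y^3 = f_6(x_0,\dots,x_4)$ (after completing the cube, using that the coefficient of $y^3$ is nonzero for a general hypersurface), exhibiting $X_6$ as a triple cover of $\mbP^4$ branched along a very general sextic; similarly $X_8$ is a double cover of $\mbP^4$ branched along a very general octic, using $y^2 = f_8$. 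In both cases $n = 4 \ge 3$ and $d = 6$ (resp.\ $8$) satisfies $d \ge n+1 = 5$, so Theorem \ref{mainthm} gives non-stable-rationality. One small point to verify is that this cyclic-cover model is indeed nonsingular and that ``very general in the cyclic-cover family'' matches ``very general hypersurface'', which follows because the locus of hypersurfaces of the stated special form is Zariski dense modulo the group action, or more simply because a very general branch divisor avoids the relevant singular loci.

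Next I would handle $X_{10} \subset \mbP(1^4,2,5)$ using Theorem \ref{mainthm2}. Here $n = 4$, and the candidate parameters are $a = 2$, $b = 5$, $d = 10$. I check the four hypotheses: (1) $\gcd(2,5) = 1$; (2) $10$ is divisible by both $2$ and $5$, and $d/a = 5 \ge 2$, $d/b = 2 \ge 2$; (3) we need a prime dividing $d/b = 2$ but not $d/a = 5$, and $p = 2$ works; (4) $n + a = 6 \le 10 = d < 11 = n + a + b$. All conditions hold, so Theorem \ref{mainthm2} applies and $X_{10}$ is not stably rational. Combining the four cases gives the corollary.

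The main obstacle, such as it is, is the bookkeeping in the first step: one must make sure the reduction of $X_6$ and $X_8$ to cyclic covers of $\mbP^4$ is legitimate for a \emph{very general} member — i.e.\ that completing the cube (resp.\ square) does not shrink the parameter space below ``very general,'' and that the resulting branch divisor is itself very general of the right degree. This is routine: the substitution $y \mapsto y - (\text{lower order in } y)/(\text{leading coeff})$ is a coordinate change on the ambient weighted projective space that is defined generically, so it identifies an open dense subset of the full hypersurface family with an open dense subset of the cyclic-cover family, and ``very general'' is preserved under such identifications. I would state this as a brief lemma or simply remark on it inline. With that in hand, no further difficulty arises, and the corollary is immediate from the cited classification together with Theorems \ref{mainthm}, \ref{mainthm2} and \cite{Totaro}.
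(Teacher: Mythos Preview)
Your proposal is correct and follows essentially the same route as the paper: cite \cite{Totaro} for $X_5$, realize $X_6$ and $X_8$ as triple and double covers of $\mbP^4$ and apply Theorem~\ref{mainthm}, and verify the hypotheses of Theorem~\ref{mainthm2} for $X_{10}$ with $(n,a,b,d)=(4,2,5,10)$. The only difference is that you make explicit the ``completing the cube/square'' step to match the very-general conditions, which the paper leaves implicit.
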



\begin{Ack}
The author is partially supported by JSPS KAKENHI Grant Number 26800019.
\end{Ack}

\section{Preliminaries} \label{sec:prelim}

In this section, we explain idea of proof of Theorem \ref{mainthm} by recalling necessary definitions and results.
For a variety $X$, we denote by $\CH_0 (X)$ the Chow group of $0$-cycles on $X$.
For universal $\CH_0$-triviality, we refer readers to the survey paper \cite{Pir} (ant the references therein) for details.

\begin{Def}
\begin{enumerate}
\item A projective variety $X$ defined over a field $k$ is {\it universally $\CH_0$-trivial} if for any field $F$ containing $k$, the degree map $\CH_0 (X_F) \to \mbZ$ is an isomorphism.
\item A projective morphism $\varphi \colon Y \to X$ defined over a field $k$ is {\it universally $\CH_0$-trivial} if for any field $F$ containing $k$, the push-forward map $\varphi_* \colon \CH_0 (Y_F) \to \CH_0 (X_F)$ is an isomorphism.
\end{enumerate}
\end{Def}

For the proof of Theorem \ref{mainthm}, we apply the following specialization argument.

\begin{Lem}[{\cite[Lemma 1.5]{CTP1}}]
If $X$ is a smooth, projective, stably rational variety, then $X$ is universally $\CH_0$-trivial.
\end{Lem}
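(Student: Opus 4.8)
The plan is to reduce the statement, via two standard devices, to the fact that projective space admits a decomposition of the diagonal. The first device is to strip off projective-space factors: for any smooth projective variety $Y$ over a field and any $m \ge 0$, the first projection induces an isomorphism $\CH_0(Y \times \mbP^m) \cong \CH_0(Y)$ (only the bottom piece of the projective bundle formula survives), and this isomorphism is compatible with extension of the ground field; hence $Y$ is universally $\CH_0$-trivial if and only if $Y \times \mbP^m$ is. Since $X$ is stably rational, $X \times \mbP^m$ is rational for some $m$, and it is again smooth and projective. So it suffices to prove that a smooth projective rational variety is universally $\CH_0$-trivial, and then apply this to $X \times \mbP^m$ to conclude the same for $X$.

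The second device is the decomposition of the diagonal. For $X$ smooth, projective and geometrically integral over $k$, of dimension $n$, one has the equivalence: $X$ is universally $\CH_0$-trivial if and only if there is an identity
\[ \Delta_X = [X \times \{P\}] + Z \qquad \text{in } \CH_n(X \times X), \]
where $P$ is a zero-cycle of degree $1$ on $X$ and $Z$ is supported on $X \times D$ for some closed subset $D \subsetneq X$. For the direction we need ($\Leftarrow$), base-change this identity to an arbitrary field $F \supseteq k$ and let the correspondence act on $\CH_0(X_F)$: after moving a given zero-cycle away from $D$, the term $Z$ contributes nothing, and one reads off that every zero-cycle $\alpha$ on $X_F$ is rationally equivalent to $(\deg \alpha)\cdot P_F$; together with the degree-$1$ cycle $P$ this says the degree map $\CH_0(X_F) \to \mbZ$ is an isomorphism. (The converse, which we do not strictly need, comes from specializing to $F = k(X)$ and spreading out the resulting rational equivalence of the generic point over a dense open subset of the second factor.)

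It remains to exhibit the decomposition for a smooth projective rational $X$ of dimension $n$. On $\mbP^n$ one has $\Delta_{\mbP^n} = \sum_{i=0}^{n} h^{i} \times h^{n-i}$ with $h$ the hyperplane class; the only summand dominating the second factor is $h^{0} \times h^{n} = [\mbP^n \times \{\mathrm{pt}\}]$, so $\mbP^n$ admits a decomposition of the diagonal. Fixing a birational map from $\mbP^n$ to $X$ and using the closure of its graph as a correspondence — it maps properly and birationally onto each factor, and isomorphically over the complement of a subset of codimension $\ge 2$ in $\mbP^n$ — one transports this decomposition to $X$; equivalently, one invokes the fact that admitting a decomposition of the diagonal is a birational invariant among smooth projective varieties. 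Combining the three steps, $X$ is universally $\CH_0$-trivial.

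I expect the main obstacle to be the correspondence bookkeeping in the last two steps: in the decomposition-of-the-diagonal step the moving-lemma input is delicate when $F$ is a finite field (one passes to an auxiliary extension and descends by a norm argument), and in the last step the closure of the graph of the birational map is usually singular, so some care is needed to see that intersecting with it is legitimate (here smoothness of $\mbP^n$ is used) and that proper push-forward to $X$ induces the expected map on $\CH_0$. All of this is standard, and modulo these points the argument is formal.
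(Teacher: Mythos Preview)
The paper does not prove this lemma; it merely cites \cite[Lemma 1.5]{CTP1} and moves on. So there is no ``paper's own proof'' to compare against, and your task reduces to whether your argument is correct on its own terms. It is.

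Your three-step outline is the standard one and matches what is done in \cite{CTP1}: (i) strip off $\mbP^m$-factors via the projective bundle formula, reducing to the rational case; (ii) use the equivalence between universal $\CH_0$-triviality and the existence of an integral decomposition of the diagonal; (iii) exhibit the decomposition for $\mbP^n$ and transport it along a birational map. The caveats you flag (finite residue fields in the moving step, singularity of the graph closure) are exactly the points that require care, and you identify them correctly. One remark: the detour through the decomposition of the diagonal is not strictly necessary. Once you have reduced to the rational case, it is slightly more economical to invoke directly that $\CH_0$ is a birational invariant of smooth proper varieties over any field (move zero-cycles into the open set where the birational map is defined, then push across), and that $\CH_0(\mbP^n_F)\cong\mbZ$ for every $F$. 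This avoids the correspondence bookkeeping you worry about at the end. Either route is fine.
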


\begin{Thm}[{\cite[Th\'eor\`eme 1.14]{CTP1}}] \label{thm:specialization}
Let $A$ be a discrete valuation ring with fraction field $K$ and residue field $k$, with $k$ algebraically closed.
Let $\mcX$ be a flat proper scheme over $A$ with geometrically integral fibers.
Let $X$ be the general fiber $\mcX \times_A K$ and $Y$ the special fiber $X \times_A k$.
Assume that there is a universally $\CH_0$-trivial proper birational morphism $Y' \to Y$ with $Y'$ smooth over $k$.
Let $\overline{K}$ be an algebraic closure of $K$.
Assume that there is a proper birational morphism $X' \to X$ smooth over $K$ such that $X'_{\overline{K}}$ is universally $\CH_0$-trivial.
Then $Y'$ is universally $\CH_0$-trivial.
\end{Thm}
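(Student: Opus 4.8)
The plan is to exhibit a decomposition of the diagonal of $Y'$. Throughout I use the standard fact (Bloch--Srinivas) that a smooth projective geometrically integral variety $Z$ over a field $F$ carrying a zero-cycle of degree $1$ is universally $\CH_0$-trivial if and only if $\Delta_Z = Z\times z + Z_1$ in $\CH_{\dim Z}(Z\times_F Z)$ for some degree-$1$ zero-cycle $z$ and some cycle $Z_1$ supported on $D\times Z$ with $D\subsetneq Z$ closed, equivalently if and only if $[\xi_Z]=[z\otimes F(Z)]$ in $\CH_0(Z_{F(Z)})$, where $\xi_Z$ is the canonical $F(Z)$-point. Because $k$ is algebraically closed, every $k$-variety appearing below (and, after the base change described next, also the relevant $K$-varieties) carries a zero-cycle of degree $1$. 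The argument has three moves: descend a diagonal decomposition from $X'$ to $X$, specialize it along $\mcX$ to obtain one on $Y$, and lift it along the resolution $\rho\colon Y'\to Y$.

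First I would make a preliminary base change. Since $X'_{\overline K}$ is universally $\CH_0$-trivial, $\Delta_{X'_{\overline K}}$ admits a decomposition over $\overline K$; the finitely many cycles and rational equivalences realizing it are defined over a finite subextension $K\subseteq L\subseteq\overline K$. Let $A_L$ be the integral closure of $A$ in $L$ localized at a maximal ideal: it is a discrete valuation ring with fraction field $L$ whose residue field is again $k$, since $k$ is algebraically closed. Every hypothesis survives the flat base change $-\times_A A_L$ — the fibres of $\mcX_L:=\mcX\times_A A_L$ remain geometrically integral, the special fibre is still $Y$, $X'_L\to X_L$ is still proper birational with $X'_L$ smooth, $(X'_L)_{\overline L}=X'_{\overline K}$ is still universally $\CH_0$-trivial, and $\rho$ is untouched — and now $\Delta_{X'_L}$ decomposes over $L$. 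So I may assume from the outset that $\Delta_{X'}$ decomposes over $K$.

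Next I would push this decomposition forward along $f\times f\colon X'\times_K X'\to X\times_K X$, where $f\colon X'\to X$. Using that $f$ is proper and birational, that $X$ is integral of dimension $d$, and that degree is preserved by proper push-forward of zero-cycles, one obtains $\Delta_X = X\times z_1 + Z_1$ in $\CH_d(X\times_K X)$ with $\deg z_1=1$ and $Z_1$ supported on $E\times X$, $E:=f(D')\subsetneq X$ closed of dimension $\le d-1$. Then I would apply Fulton's specialization homomorphism $\mathrm{sp}\colon\CH_d(X\times_K X)\to\CH_d(Y\times_k Y)$ attached to the flat proper $A$-scheme $\mcX\times_A\mcX$, whose special fibre $Y\times_k Y$ is the principal Cartier divisor cut out by a uniformizer (as $\mcX$, hence $\mcX\times_A\mcX$, is flat over $A$); note also that $\mcX$ is integral, so $Y$ is pure of dimension $d$. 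This $\mathrm{sp}$ sends the class of a subvariety of $X\times_K X$ to the class of the special fibre of its closure in $\mcX\times_A\mcX$, that closure being flat over $A$. Applying it term by term: $\Delta_{\mcX}\cong\mcX$ is flat over $A$ with reduced special fibre $\Delta_Y$ (because $Y$ is geometrically integral), so $\mathrm{sp}[\Delta_X]=[\Delta_Y]$; $X\times z_1$ extends to $\mcX\times_A\overline{z_1}$, yielding $\mathrm{sp}[X\times z_1]=[Y\times w_0]$ with $\deg w_0=\deg z_1=1$; and since the closure $\overline{E}\subset\mcX$ of $E$ is flat over $A$ of dimension $\le d$, its special fibre $E':=\overline{E}_k\subsetneq Y$ is closed of dimension $\le d-1$, so $\mathrm{sp}[Z_1]$ is supported on $E'\times Y$. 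Hence $\Delta_Y = Y\times w_0 + Z_1'$ in $\CH_d(Y\times_k Y)$ with $Z_1'$ supported on $E'\times Y$: a decomposition of the diagonal of the (possibly singular) $Y$.

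Finally I would transfer this to the smooth resolution. Restricting $\Delta_Y = Y\times w_0 + Z_1'$ to the generic point of the first factor gives $[\xi_Y]=[w_0\otimes E]$ in $\CH_0(Y_E)$, where $E=k(Y)=k(Y')$, the term $Z_1'$ dropping out as it does not dominate the first factor. Since $\rho$ is a universally $\CH_0$-trivial morphism, $\rho_*\colon\CH_0(Y')\xrightarrow{\sim}\CH_0(Y)$ and $\rho_{E*}\colon\CH_0(Y'_E)\xrightarrow{\sim}\CH_0(Y_E)$; set $w_0':=\rho_*^{-1}(w_0)$, a degree-$1$ zero-cycle on $Y'$. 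Then $\rho_{E*}[\xi_{Y'}]=[\xi_Y]=[w_0\otimes E]=\rho_{E*}[w_0'\otimes E]$, and injectivity of $\rho_{E*}$ gives $[\xi_{Y'}]=[w_0'\otimes E]$ in $\CH_0(Y'_E)$. As $Y'$ is smooth projective geometrically integral over $k$ with a degree-$1$ zero-cycle, this is a decomposition of its diagonal, so $Y'$ is universally $\CH_0$-trivial. The step I expect to be the main obstacle is the specialization in the third move: defining $\mathrm{sp}$ correctly on rational equivalence classes and checking that it carries the diagonal to the diagonal with multiplicity one and the error term to a cycle still supported over a proper closed subset. This hinges on $\mcX$, $\mcX\times_A\mcX$, and the various closures being flat over $A$ — so that intersecting with the Cartier divisor cut out by a uniformizer really computes special fibres — together with $Y$ and $\Delta_Y$ being reduced and of pure dimension $d$, which is what keeps the dimension bookkeeping on the error term honest.
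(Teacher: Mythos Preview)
The paper does not give its own proof of this statement; it is quoted from \cite[Th\'eor\`eme 1.14]{CTP1} and used as a black box in Section~\ref{sec:proof}. Your argument is precisely the one in that reference: pass to a finite extension of $K$ so that the diagonal of $X'$ decomposes over the ground field, push forward along $f\times f$ to get a decomposition of $\Delta_X$, apply Fulton's specialization map for $\mcX\times_A\mcX$ to obtain a decomposition of $\Delta_Y$, and then use the universal $\CH_0$-triviality of $\rho$ to transport it to $Y'$. So there is nothing to compare --- your proof is correct and coincides with the source the paper cites.

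One small slip worth fixing: in your last paragraph you call $Y'$ ``smooth projective'', but the hypotheses only give $\mcX$ proper over $A$, hence $Y'$ is smooth and \emph{proper} over $k$, not necessarily projective. This does not affect the argument, since the equivalence between universal $\CH_0$-triviality and the condition $[\xi_{Y'}]=[w_0'\otimes k(Y')]$ in $\CH_0(Y'_{k(Y')})$ holds for any smooth proper geometrically integral variety with a zero-cycle of degree $1$ (only the localization sequence and spreading out are used). Likewise, when you verify that $\mathrm{sp}[Z_1]$ lands in a proper closed subset of $Y$, note that $E$ need not be irreducible, so flatness of $\overline{E}$ over $A$ should be argued component by component; this is routine.
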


Let $X$ be a cyclic cover of $\mbP^n$ or a very general complete intersection branched along a very general divisor of degree $d$ and let $m$ be the covering degree.
Theorem \ref{thm:specialization} enables us to work over an algebraically closed field of positive characteristic $p$, where we can apply Koll\'ar's construction of a suitable line bundle $\mcM \subset (\Omega_X^{n-1})^{\vee \vee}$ (Here $\vee \vee$ denotes the double dual).
Koll\'ar's construction will be explained in the next section.
The condition $d \ge n+1$ or $d+d_1+\cdots+d_r \ge n+r+1$ implies $H^0 (X,\mcM) \ne 0$.
We apply the following criterion for universal $\CH_0$-triviality.

\begin{Thm}[{\cite[Lemma 2.2]{Totaro}}] \label{thm:totaro}
Let $X$ be a smooth projective variety over a field $k$.
If $H^0 (X, \Omega_X^i) \ne 0$ for some $i > 0$, then $X$ is not universally $\CH_0$-trivial.
\end{Thm}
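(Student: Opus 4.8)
The plan is to prove the contrapositive: assuming that the smooth projective variety $X$ over $k$ is universally $\CH_0$-trivial, I will show that $H^0(X, \Omega_X^i) = 0$ for every $i > 0$. The first step is to convert universal $\CH_0$-triviality into a \emph{decomposition of the diagonal}. Write $d = \dim X$ and $K = k(X)$. The tautological $K$-point $\delta \in X(K)$ (the diagonal point) has degree $1$, and since the degree map $\CH_0(X) \to \mbZ$ is surjective there is a $0$-cycle $x_0$ on $X$, defined over $k$, of degree $1$. Universal $\CH_0$-triviality forces $[\delta] = [x_{0,K}]$ in $\CH_0(X_K) \cong \mbZ$. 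Using the localization identification $\CH_0(X_K) = \varinjlim_{\emptyset \ne U \subset X} \CH_d(U \times X)$, spreading this rational equivalence out over a dense open $U$, and applying the localization exact sequence to $D = X \setminus U$, I obtain an equality in $\CH_d(X \times X)$
\[
\Delta_X = X \times x_0 + Z,
\]
where $X \times x_0 = p_2^{-1}(x_0)$ and $Z$ is supported on $D \times X = p_1^{-1}(D)$ for the proper closed subset $D \subsetneq X$; here $p_1, p_2 \colon X \times X \to X$ are the two projections.

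The second step is to let this identity act on Hodge cohomology. The key input is that, for smooth projective varieties over $k$, the groups $H^q(X, \Omega_X^p)$ (together with their variants with supports) carry a functorial action of Chow correspondences, compatible with flat pullback, proper pushforward and cup product with cycle classes, with the usual degree shifts. In characteristic zero this is classical Hodge theory; in positive characteristic --- which is the setting relevant to the specialization argument of Theorem \ref{thm:specialization} --- this is the formalism of Chatzistamatiou--R\"ulling, which crucially does not require resolution of singularities of the supporting cycles. For a self-correspondence $\Gamma \in \CH_d(X \times X)$ I use the action $\Gamma_* \alpha = p_{1,*}(p_2^* \alpha \cup [\Gamma])$ on $H^0(X, \Omega_X^i)$; it is degree preserving, and the diagonal acts as the identity, $(\Delta_X)_* = \mathrm{id}$.

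It then remains to show that the two non-diagonal terms annihilate $H^0(X, \Omega_X^i)$ for $i > 0$. Let $\alpha \in H^0(X, \Omega_X^i)$. For the first term, $[X \times x_0] = p_2^*[x_0]$ with $[x_0] \in H^d(X, \Omega_X^d)$, so $(X \times x_0)_* \alpha = p_{1,*} p_2^*(\alpha \cup [x_0])$, and $\alpha \cup [x_0] \in H^d(X, \Omega_X^{i+d}) = 0$ because $\Omega_X^{i+d} = 0$ for $i + d > d = \dim X$; hence this term vanishes. For the second term, $[Z]$ is supported on $p_1^{-1}(D)$, so $p_2^* \alpha \cup [Z]$ lies in cohomology with supports along $p_1^{-1}(D)$, and its pushforward $Z_* \alpha = p_{1,*}(p_2^*\alpha \cup [Z])$ lands in the image of $H^0_D(X, \Omega_X^i) \to H^0(X, \Omega_X^i)$. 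Since $\Omega_X^i$ is locally free and $D \subsetneq X$ is a proper closed subset, $\Omega_X^i$ has no nonzero global section supported on $D$, so $H^0_D(X, \Omega_X^i) = 0$ and this term vanishes as well. Applying $(\Delta_X)_* = \mathrm{id}$ to the decomposition now gives $\alpha = (X \times x_0)_* \alpha + Z_* \alpha = 0$, whence $H^0(X, \Omega_X^i) = 0$ for all $i > 0$; the contrapositive is exactly the assertion.

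The main obstacle is the second step: one needs the action of Chow correspondences on the coherent cohomology groups $H^q(X, \Omega_X^p)$ together with its compatibility with supports and with the cycle-class and Gysin degree shifts, in arbitrary characteristic. Granting this formalism, the remaining computations are purely formal, and the essential arithmetic --- that the point term lands in the vanishing group $\Omega_X^{i+d}$, and that the $Z$-term produces a section of a locally free sheaf supported on a proper closed subset --- requires no resolution of singularities, which is precisely what keeps the argument valid in positive characteristic.
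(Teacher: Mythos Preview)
The paper does not supply its own proof of this theorem: it is stated with the citation \cite[Lemma 2.2]{Totaro} and used as a black box. Your argument is correct and is essentially the proof Totaro gives---a decomposition of the diagonal obtained from universal $\CH_0$-triviality, followed by the action of correspondences on the Hodge groups $H^q(X,\Omega_X^p)$ via the Chatzistamatiou--R\"ulling formalism (which is what makes the argument work in positive characteristic without resolution). The vanishing of the two non-diagonal terms for the reasons you give (degree overflow for the point term, local-freeness plus proper support for the $Z$-term) is exactly the computation in the cited reference, so there is nothing to contrast here.
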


The above mentioned Koll\'ar's construction is valid only when the characteristic divides $m$, which forces $X$ to have singularities.
It is then important to construct  a universally trivial $\CH_0$ resolution $\varphi \colon Y \to X$ such that $\varphi^*\mcM \subset \Omega_Y^{n-1}$.
The latter implies $H^0(Y,\Omega_Y^{n-1}) \ne 0$, hence Theorem \ref{mainthm} (see Section \ref{sec:proof} for precise arguments).

\section{Singularities of inseparable coverings} \label{sec:sing}

We briefly recall Koll\'ar's construction of a suitable line bundle on a covering space and give an explicit description of singularities of such a space.
We refer readers to \cite[V.5]{Kol2} for details.
Then we introduce the notion of admissible critical point, which in most of the cases coincides with (almost) nondegenerate critical point introduced by Koll\'ar, but is important when the characteristic is $2$.

Throughout the present section, we keep the following setting. 
We work over an algebraically closed field $\K$ of positive characteristic $p$.
Let $Z$ be a smooth variety of dimension $n$ defined over $\K$, $m$ a positive integer such that $p \mid m$, $\mcL$ a line bundle on $Z$ and $s \in H^0 (Z, \mcL^m)$. 

\subsection{Koll\'ar's construction}

Let $U = \Spec (\oplus_{i \ge 0} \mcL^{-i})$ be the total space of the line bundle $\mcL$ and $\pi_U \colon U \to Z$ the natural morphism.
We denote by $y \in H^0 (U,\pi^*_U \mcL)$ the zero section and define
\[
Z [\sqrt[m]{s}] = (y^m - s = 0) \subset U.
\]
Set $X = Z [\sqrt[m]{s}]$ and let $\pi = \pi_U|_X \colon X \to Z$ be the covering morphism.

Let $\tau$ be a local generator of $\mcL$ at a point $\msq \in Z$ and $t = f \tau^m$ a local section of $\mcL$, where $f \in \mcO_{Z,\msq}$.
Let $x_1,\dots,x_n$ be local coordinates of $Z$ at $\msq$.
Then we define
\[
d (t) = \sum \frac{\prt f}{\prt x_i} t^m d x_i \in \mcL^m \otimes \Omega_Z^1.
\]
This is independent of the choices of local coordinates $x_1,\dots,x_n$ and the local generator $\tau$, and thus defines $d \colon \mcL^m \to \mcL^m \otimes \Omega_Z^1$.
For the section $s \in H^0 (Z, \mcL^m)$, we can view $d (s)$ as a sheaf homomorphism $d (s) \colon \mcO_Z \to \mcL^m \otimes \Omega_Z^1$.
By taking the tensor product with $\mcL^{-m}$, we obtain $d s \colon \mcL^{-m} \to \Omega_Z^1$.

\begin{Def}
We define
\[
\mcQ (\mcL,s) = \left(\det \Coker (ds)\right)^{\vee \vee}.
\]
\end{Def}

Note that $\mcQ (\mcL,s) \cong \mcL^m \otimes \omega_Z$ and, by \cite[V.5.5 Lemma]{Kol2}, there is an injection
\[
\rho^*\mcQ (\mcL,s) \inj (\Omega_X^1)^{\vee \vee}.
\]

We explain a local generator of $\pi^*\mcQ (\mcL,s)$.
Let $\msp \in X$ be a point and $\msq = \pi (\msp) \in Z$.
Let $x_1,\dots,x_n$ be local coordinates of $Z$ at $\msq$, $\tau$ a local generator and write $s = f \tau^m$.
We set
\[
\eta_i = \frac{d x_1 \wedge \cdots \widehat{d x_i} \wedge \cdots \wedge d x_n}{\prt f/\prt x_i}
\]
for $i = 1,\dots,n$.
Then $\eta_i = \pm \eta_j$ and they give a local generator of $\pi^*\mcQ (\mcL,s)$ which we denote by $\eta_{\msp}$ (see \cite[V.5.9 Lemma]{Kol2}).

\subsection{Description of singularities}

The singularities of $X = Z [\sqrt[m]{s}]$ can be understood in terms of critical points of $s$.

\begin{Def}
Let $s \in H^0 (Z,\mcL^m)$ be a section.
We say that $s$ has a critical point at $\msq \in Z$ if $d (s) \in H^0 (Z,\mcL^m \otimes \Omega_Z^1)$ vanishes at $\msq$.
We say that $s$ is {\it singular} at $\msq \in Z$ if it has a critical point at $\msq$ and $s$ vanishes at $\msq$.

Suppose that $s$ has a critical point at $\msq$.
Then we say that $s$ has a {\it nondegenerate critical point} at $\msq$ if in a suitable local coordinates $x_1,\dots,x_n$ and a local generator $\tau$ of $\mcL$, $s$ can be written as
\[
s = (\alpha + q (x_1,\dots,x_n) + (\text{higher degree terms})) \tau^m,
\]
in a neighborhood of $\msq$, where $\alpha \in \K$, and the quadratic part $q$ is nondegenerate.
If $p = 2$ and $n$ is odd, then a critical point is always degenerate.
In this case we say that $s$ has an {\it almost nondegenerate critical point} at $\msq$ if in a suitable choice of local coordinates $x_1,\dots,x_n$ and a local generator $\tau$ of $\mcL$, $s$ can be written as
\[
s = (\alpha + \beta x_1^2 + x_2 x_3 + x_4 x_5 + \cdots + x_{n-1} x_n + f) \tau^m,
\]
where $\alpha,\beta \in \K$, $f$ consists of monomials of degree at least $3$ and the coefficient of $x_1^3$ in $f$ is nonzero.
\end{Def}


It is easy to see that $\Sing X = \pi^{-1} (\operatorname{Crit} (s))$, where  $\operatorname{Crit} (s)$ denotes the set of critical points of $s$.
We give an explicit local description of singularities of $X$. 
Suppose that $s$ has a (almost) nondegenerate critical point at $\msq \in Z$.
Then, by a suitable choice of local coordinates, $s$ can be written as $s = \alpha + q + f$, where $\alpha \in \K$,
\begin{equation} \label{eq:nondegquad}
q =
\begin{cases}
x_1 x_2 + x_3 x_4 + \cdots + x_{n-1} x_n, & \text{if $n$ is even}, \\
\beta x_1^2 + x_2 x_3 + x_4 x_5 + \cdots + x_{n-1} x_n, & \text{if $n$ is odd},
\end{cases}
\end{equation}
and $f = f (x_1,\dots,x_n)$ consists of monomials of degree at least $3$.
Moreover, if $n$ is odd and $p \ne 2$, then we may assume $\beta = 1$, and if $n$ is odd and $p = 2$, then the coefficient of $x_1^3$ in $f$ is nonzero.
Then locally over $\msq$, $X$ is defined by
\begin{equation} \label{eq:initialdefeq}
y^m + \alpha + q + f = 0.
\end{equation}

\begin{Lem} \label{lem:descrsing}
Suppose $s$ has an $($alomost$)$ nondegenerate critical point at $\msq$.
Let $\msp \in X$ be a singular point such that $\pi (\msp) = \msq$.
Then in a suitable choice of local coordinates $x_1,\dots,x_n$ and $y$ of $X$ with origin at $\msp$, $X$ is defined by an equation of the form
\begin{equation} \label{eq:basicdefeq}
\gamma_e y^{e d} + \gamma_{e-1} y^{(e-1)d} + \cdots + \gamma_2 y^{2d} + y^d + q + f = 0,
\end{equation}
where $p \mid d$, $e \ge 1$, $\gamma_i \in \K$ and $q,f$ are as above.
\end{Lem}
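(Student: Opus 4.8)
The plan is to take the local normal form \eqref{eq:initialdefeq} as the starting point and track how it transforms under an affine change of the single fibre coordinate $y$. Since the hypothesis that $s$ has an (almost) nondegenerate critical point at $\msq$ has already been used to put $s$ into the shape $(\alpha + q + f)\tau^m$ with $q$ as in \eqref{eq:nondegquad}, and since none of the coordinate changes I will perform involves $x_1,\dots,x_n$, the shapes of $q$ and $f$ — in particular the nonvanishing of the coefficient of $x_1^3$ in $f$ when $n$ is odd and $p=2$ — are preserved automatically; so the whole argument is really about the $y$-variable.

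First I would write $m = p^k m'$ with $p \nmid m'$ and $k \ge 1$ (possible since $p \mid m$). In the coordinates of \eqref{eq:initialdefeq} the fibre $\pi^{-1}(\msq)$ is $\{x_1 = \cdots = x_n = 0\}\cap X$, so its points are exactly those of the form $(0,\dots,0,c)$ with $c^m + \alpha = 0$; since $q$ and $f$ have vanishing first partials at the origin and $p \mid m$, the differential of the defining polynomial of \eqref{eq:initialdefeq} vanishes at each such point, so $\pi^{-1}(\msq) \subseteq \Sing X = \pi^{-1}(\operatorname{Crit}(s))$ and $\msp$ is one of these points. Letting $c$ denote its $y$-coordinate and substituting $y \mapsto y + c$ (so the origin is now at $\msp$), and using $c^m = -\alpha$ together with the Frobenius identity $(y+c)^m = (y^{p^k}+c^{p^k})^{m'}$, I obtain
\[
(y+c)^m + \alpha = (y^{p^k}+c^{p^k})^{m'} - (c^{p^k})^{m'} = \sum_{j=1}^{m'}\binom{m'}{j}\, c^{p^k(m'-j)}\,(y^{p^k})^j ,
\]
a polynomial in $y^{p^k}$ with vanishing constant term; hence near $\msp$ the variety $X$ is defined by $\sum_{j=1}^{m'}\binom{m'}{j} c^{p^k(m'-j)}(y^{p^k})^j + q + f = 0$.

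Then I would split into two cases. If $c \ne 0$ (equivalently $\alpha \ne 0$), I set $d = p^k$, so $p \mid d$; the coefficient of $y^d$ equals $m' c^{d(m'-1)}$, which is nonzero because $p \nmid m'$, so rescaling $y$ by a scalar $\lambda \in \K^{\times}$ with $\lambda^d = \bigl(m' c^{d(m'-1)}\bigr)^{-1}$ (available since $\K$ is algebraically closed) normalises this coefficient to $1$ without disturbing $q$ or $f$, and yields exactly \eqref{eq:basicdefeq} with $e = m' \ge 1$ and $\gamma_j = \binom{m'}{j} c^{d(m'-j)} \lambda^{dj}$ (and in fact $\gamma_e = \lambda^{dm'} \ne 0$). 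If instead $c = 0$ (equivalently $\alpha = 0$), no substitution is needed and $X$ is already given near $\msp$ by $y^m + q + f = 0$, which is \eqref{eq:basicdefeq} with $d = m$ (so $p \mid d$) and $e = 1$. In either case the desired equation is obtained.

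I do not expect a serious obstacle here: the only delicate points are the characteristic-$p$ binomial expansion of $(y+c)^m$ and the bookkeeping of the sign in $c^m = -\alpha$, together with noticing that the case $\alpha = 0$ genuinely forces $d = m$ (since $y^m$ is then the only power of $y$ occurring), which is why the two cases end with different pairs $(d,e)$.
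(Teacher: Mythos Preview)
Your proof is correct and follows essentially the same approach as the paper's: split into the cases $\alpha = 0$ (where $d=m$, $e=1$ works immediately) and $\alpha \ne 0$ (where one translates $y$ by an $m$th root of $-\alpha$ and then rescales), with your version simply making the Frobenius expansion $(y+c)^m = (y^{p^k}+c^{p^k})^{m'}$ and the resulting coefficients explicit. The paper's proof is a three-line sketch of exactly this argument.
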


\begin{proof}
If $\alpha = 0$, then the claim follows obviously by setting $e = 1$ and $d = m$.
Suppose that $\alpha \ne 0$.
We write $m = d e$, where $d$ is a power of $p$ and $e$ is not divisible by $p$.
Then the claim follows by replacing $y \mapsto y - \sqrt[m]{\alpha}$ and then scaling $y$ in the defining equation \eqref{eq:initialdefeq}.
\end{proof}

\begin{Rem} \label{rem}
Let $\msp \in X$ be as in Lemma \ref{lem:descrsing} and $\eta_{\msp}$ the local generator of $\pi^*\mcQ (\mcL,s)$.
In the next section, we choose $\eta_n$ as a representative of $\eta_{\msp}$.
Then
\[
\eta_{\msp} = \frac{d x_1 \wedge \cdots \wedge d x_{n-1}}{x_n + \cdots},
\]
where the omitted term in $x_n + \cdots$ is a sum of monomials of degree at least $2$.
\end{Rem}

\subsection{Admissible critical points}

In the next section, we will construct a resolution of singularities of the germ $\msp \in X$ given in Lemma \ref{lem:descrsing}.
However we have a problem in constructing a desired resolution when  $n$ is odd, $p = 2$, $d > 2$ and $\beta = 0$ in the defining equation \eqref{eq:basicdefeq}.
Under the assumption that $n$ is odd and $p = 2$, this bad situation happens if and only if $\beta = 0$ and either $2^2 \mid m$ or $m > 2$ and $s$ is singular at $\msq$ (i.e.\ $\alpha = 0$). 
We explain that, in this case, we can avoid the situation $\beta = 0$ under a mild condition.

We assume that $s$ has a critical point at $\msq$.
Moreover we assume that $n$ is odd, $p = 2$ and either $2^2 \mid m$ or $m > 2$ and $s$ is singular at $\msq$.
Let $x_1,\dots,x_n$ be local coordinates of $Z$ at $\msq$ and $\tau$ a local generator of $\mcL$ at $\msq$. 
Then $s = g (x_1,\dots,x_n) \tau^m$ and we can write $g = \alpha + g_1 + g_2 + \cdots$, where $\delta \in \K$ and $g_i$ is homogeneous of degree $i$.
Note that $g_1 = 0$ since $s$ has a critical point at $\msq$.
If we choose another local generator $\tau'$ of $\mcL$ and write $s = g' {\tau'}^{m}$, then $g' = u^m g$ where $u = u (x_1,\dots,x_n)$ is a unit of $\mcO_{Z,\msq}$.
We can write $u = \lambda + h$ for some nonzero $\lambda \in \K$ and $h \in \mfm_{\msp}$.
We note that the ground field is of characteristic $2$ and $2 \mid m$.
It follows that $g'_2 = \lambda^m g_2$ since either $2^2 \mid m$ or $m > 2$ and $\alpha = 0$.
Thus, the quadratic part $g_2$ does not depend on the choice of local generator (up to a multiple by a non-zero constant).

\begin{Def}
Under the assumption, we define $T (s,\msq)$ to be 
\[
(g_2 = 0) \subset \mbP^{n-1} = \Proj \K [x_1,\dots,x_n]
\]
and call it the {\it quadric associated with the critical point} $\msq$ of $s$.
\end{Def}

Note that the isomorphism class of $T(s,\msq)$ does not depend on the choice of local coordinates. 

\begin{Lem} \label{lem:admcr}
Assume that $n = \dim Z$ is odd, $p = 2$ and $m > 2$.
Let $V \subset H^0 (Z,\mcL^m)$ be a finite dimensional vector space and assume that the restrictions map
\[
V \to \mcL^m \otimes (\mcO_{Z,\msq}/\mfm_{\msq}^3)
\]
is surjective for any point $\msq \in Z$, where $\mfm_{\msq}$ is the maximal ideal of $\mcO_{Z,\msq}$.
Then, a general $s \in V$ has only critical points $\msq$ such that $T (s,\msq)$ is nonsingular.
\end{Lem}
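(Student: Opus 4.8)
The plan is to set this up as a dimension-count / genericity argument over the space $V$. First I would reduce the statement to a local question at each fixed point $\msq$: given $\msq$, the condition that $s$ has a critical point at $\msq$ with $T(s,\msq)$ singular is a closed condition on the $3$-jet of $g$ (where $s = g\tau^m$ locally), namely the vanishing of $g_1$ together with the vanishing of the discriminant of the quadratic part $g_2$. Since by hypothesis the restriction $V \to \mcL^m \otimes (\mcO_{Z,\msq}/\mfm_{\msq}^3)$ is surjective, the $3$-jet of $g$ at $\msq$ is an unconstrained parameter as $s$ ranges over $V$; so I can compute, inside the space $J$ of $3$-jets (of dimension $1 + n + \binom{n+1}{2}$ over $\K$, recording the constant, linear, and quadratic coefficients), the dimension of the locus $B_\msq \subset J$ of jets with $g_1 = 0$ and $\operatorname{disc}(g_2) = 0$. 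The linear condition $g_1=0$ cuts down by $n$; the discriminant of an $n\times n$ symmetric matrix in characteristic $2$ (for $n$ odd this is the relevant half-determinant/Pfaffian-type invariant, nonzero on a nonempty open set by the model $\beta x_1^2 + x_2x_3 + \cdots$) is a nonzero polynomial, so $\{\operatorname{disc}(g_2)=0\}$ is a proper closed subvariety of the space of quadratic parts, hence cuts down by at least $1$ more. Thus $\operatorname{codim}_J B_\msq \ge n+1$.

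Next I would globalize by an incidence-variety argument. Consider $I = \{(\msq, s) \in Z \times V : s \text{ has a critical point at } \msq \text{ and } T(s,\msq) \text{ is singular}\}$ (for $s$ singular at $\msq$ one takes $\alpha = 0$ as well; this is an additional closed condition which only helps). The projection $I \to Z$ has fibers that are, via the surjectivity hypothesis, affine-bundle-like over the bad-jet loci $B_\msq$, so $\dim I \le \dim Z + \dim V - \operatorname{codim}_J B_\msq \le n + \dim V - (n+1) = \dim V - 1$. Therefore the image of the other projection $I \to V$ has dimension at most $\dim V - 1$, so a general $s \in V$ avoids it; for such $s$, every critical point $\msq$ has $T(s,\msq)$ nonsingular. (One should check measurability/constructibility of $I$, which is routine: the critical-point condition and the discriminant condition are given by algebraic equations in local trivializations, patched over a finite affine cover of $Z$.)

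The main obstacle I expect is the characteristic-$2$ bookkeeping for the ``discriminant'' of $g_2$: when $p=2$ the usual determinant of the symmetric matrix vanishes identically, so one must identify the correct invariant whose nonvanishing defines nonsingularity of the quadric $(g_2=0) \subset \mbP^{n-1}$ in characteristic $2$ with $n$ odd, and verify it is a nonzero polynomial in the quadratic coefficients (which follows by exhibiting the single model $\beta x_1^2 + x_2x_3 + \cdots + x_{n-1}x_n$ with $\beta \ne 0$, already shown in the text to be smooth). A secondary point is to make sure the independence of $g_2$ from the choice of local generator $\tau$—established just before the lemma precisely under the hypotheses $2^2 \mid m$ or ($m>2$ and $\alpha=0$)—is used correctly so that the locus $\{T(s,\msq) \text{ singular}\}$ is well-defined; this is why the hypothesis $m > 2$ (together with the characteristic $2$ assumption) enters. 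Once these local facts are in hand, the dimension count is straightforward.
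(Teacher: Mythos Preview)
Your proposal is correct and follows essentially the same dimension-counting approach as the paper: the paper's proof simply observes that $V^{\operatorname{cr}}_{\msq}$ has codimension $n$ in $V$ by surjectivity of the restriction map, that the sections therein with singular $T(s,\msq)$ form a further codimension-$1$ locus, and then concludes by the standard dimension count over $Z$. Your incidence-variety formulation is the same argument written out more carefully, and your attention to the characteristic-$2$ invariant detecting singularity of $(g_2=0)$ (witnessed nonzero by the model $\beta x_1^2 + x_2x_3 + \cdots$) fills in a point the paper leaves implicit.
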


\begin{proof}
Proof will be done by counting dimensions.
Let $V^{\operatorname{cr}}_{\msq}$ be the set of sections in $V$ which has a critical point at $\msq$.
By the surjectivity of the restriction map, we see that $V_{\msq}^{\operatorname{cr}}$ is of codimension $n$ in $V$.
Again, by the surjectivity of the restriction map, the set $V^{\operatorname{sing}}_{\msq}$ of sections $s$ in $V^{\operatorname{cr}}_{\msq}$ for which $T(s,\msq)$ is singular is of codimension $1$ in $V^{\operatorname{cr}}$.
Therefore, the assertions follows by the dimension counting argument.
\end{proof}

We finish this section by introducing a unified notion of (almost) nondegenerate critical point and the singularity of $T(s,\msq)$, and then improving the description in Lemma \ref{lem:descrsing}.

\begin{Def}
We say that $s \in H^0 (Z,\mcL^m)$ has an {\it admissible critical point} at $\msq \in Z$ if $s$ satisfies one of the following:
\begin{enumerate}
\item Either $n$ is even or $n$ is odd and $p \ne 2$, and $s$ has a nondegenerate critical point at $\msq$.
\item $n$ is odd, $p = 2$, $m = 2$ and $s$ has an almost nondegenerate critical point at $\msq$.
\item $n$ is odd, $p = 2$, $m \ne 2$, $2^2 \nmid m$ and $s$ has an almost nondegenerate critical point at $\msq$ but not singular at $\msq$.
\item $n$ is odd, $p = 2$, $m \ne 2$, $2^2 \nmid m$, $s$ has an almost nondegenerate critical point at $\msq$, $s$ is singular at $\msq$ and $T (s,\msq)$ is nonsingular.
\item $n$ is odd, $p = 2$, $2^2 \mid m$, $s$ has an almost nondegenerate critical point at $\msq$ and $T(s,\msq)$ is nonsingular.
\end{enumerate}
\end{Def}

\begin{Lem} \label{lem:descrsingadd}
Suppose $s$ has an admissible critical point at $\msq$.
Let $\msp \in X$ be a singular point such that $\pi (\msp) = \msq$ and let
\[
\gamma_e y^{ed} + \gamma_{e-1} y^{(e-1)d} + \cdots + \gamma_2 y^{2d} + y^d + q + f = 0,
\]
be the defining equation of the germ $\msp \in X$ given in \emph{Lemma \ref{lem:descrsing}}. 
If $n$ is odd, $p = 2$ and $d > 2$, then $\beta \ne 0$, that is, $x_1^2$ appears in $q$ with non-zero coefficients.
\end{Lem}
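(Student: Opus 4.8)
The plan is to run through the five cases in the definition of admissible critical point, discard the ones incompatible with the hypothesis $d > 2$, and then read off $\beta \ne 0$ from the survivors. Since $n$ is odd and $p = 2$, we are in one of the cases (2)--(5). Recall from the proof of Lemma~\ref{lem:descrsing} that $d = m$ when $s$ is singular at $\msq$ (i.e.\ $\alpha = 0$), whereas $d$ equals the largest power of $2$ dividing $m$ when $\alpha \ne 0$. In case (2) we have $m = 2$, so $d = 2$ no matter the value of $\alpha$; in case (3) we have $2^2 \nmid m$ together with $\alpha \ne 0$ (since $s$ is not singular at $\msq$), so again $d = 2$. Both contradict the hypothesis $d > 2$. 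Hence under $d > 2$ only cases (4) and (5) remain, and in both of them the standing hypothesis includes that $T(s,\msq)$ is nonsingular.

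Next I would identify the quadric $T(s,\msq)$ explicitly. By the almost nondegenerate critical point hypothesis and the discussion preceding the definition of $T(s,\msq)$, after the relevant coordinate change the quadratic part of the local expansion of $s/\tau^m$ is exactly $q = \beta x_1^2 + x_2 x_3 + x_4 x_5 + \cdots + x_{n-1}x_n$; moreover the normalizations used to pass from \eqref{eq:initialdefeq} to \eqref{eq:basicdefeq} (substituting $y \mapsto y - \sqrt[m]{\alpha}$, which in characteristic $2$ also cancels the constant term, and rescaling $y$) affect only $y$ and leave $q$ untouched. Therefore $T(s,\msq)$ is, up to isomorphism, the quadric
\[
(\beta x_1^2 + x_2 x_3 + x_4 x_5 + \cdots + x_{n-1}x_n = 0) \subset \mbP^{n-1}.
\]

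Finally I would apply the Jacobian criterion in characteristic $2$. The partial derivatives of $\beta x_1^2 + x_2 x_3 + \cdots + x_{n-1}x_n$ with respect to $x_1, x_2, \dots, x_n$ are $0, x_3, x_2, x_5, x_4, \dots, x_n, x_{n-1}$, which vanish simultaneously only at $[1:0:\cdots:0]$, and this point lies on the quadric if and only if $\beta = 0$. Hence $T(s,\msq)$ is nonsingular precisely when $\beta \ne 0$; combined with the previous paragraph this gives $\beta \ne 0$ in cases (4) and (5), which completes the proof.

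I do not expect a genuine obstacle here. The only points that require care are the bookkeeping of which values of $\alpha$ and of the $2$-adic valuation of $m$ are permitted in each of (2)--(5) — so that cases (2) and (3) are indeed excluded by $d > 2$ while (4) and (5) survive — and the verification that the $q$ occurring in \eqref{eq:basicdefeq} literally coincides with the quadratic part cutting out $T(s,\msq)$, which is what converts the nonsingularity of $T(s,\msq)$ into the inequality $\beta \ne 0$.
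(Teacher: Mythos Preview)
Your proposal is correct and follows the same route as the paper's proof: reduce, via the computation of $d$ in the proof of Lemma~\ref{lem:descrsing}, to the situation where the admissibility hypothesis forces $T(s,\msq)$ to be nonsingular (your elimination of cases (2) and (3) is exactly the paper's observation that $d>2$ happens iff $2^2\mid m$ or $m\ne 2$ and $s$ is singular at $\msq$), and then read off $\beta\ne 0$ from the nonsingularity of $T(s,\msq)=(q=0)$. The paper leaves the last implication implicit, while you spell out the Jacobian check and the fact that the passage from \eqref{eq:initialdefeq} to \eqref{eq:basicdefeq} does not disturb $q$; these details are accurate.
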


\begin{proof}
Suppose that $n$ is odd and $p = 2$.
Then the case $d > 2$ happens if and only if either $2^2 \mid m$ or $m \ne 2$ and $s$ is singular at $\msq$.
In this case $T (s,\msq)$ is nonsingular and this implies that $\beta \ne 0$.
\end{proof}

\section{Good resolution of inseparable coverings} \label{sec:resol}

The aim of this section is to prove the following.

\begin{Prop} \label{prop:resol}
Let $\msp \in X = Z[\sqrt[m]{s}]$ be a germ of an isolated singularity corresponding to an admissible critical point of $s$ at $\pi (\msp)$.
Assume that $n = \dim Z = \dim X \ge 3$. 
Then there exists a universally $\CH_0$-trivial resolution $\varphi \colon Y \to X$ of singularities such that the pullback $\varphi^*\eta_{\msp}$ is a regular section of $\Omega_Y^{n-1}$.
\end{Prop}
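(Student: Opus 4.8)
The plan is to work with the explicit local equation of the germ provided by Lemma~\ref{lem:descrsing}, namely
\[
F := \gamma_e y^{ed} + \gamma_{e-1} y^{(e-1)d} + \cdots + \gamma_2 y^{2d} + y^d + q + f = 0,
\]
with $p \mid d$, $e$ prime to $p$, and $q$ the (almost) nondegenerate quadric of \eqref{eq:nondegquad}; by Lemma~\ref{lem:descrsingadd}, in the remaining bad case $n$ odd, $p=2$, $d>2$ we may assume $\beta \ne 0$. I would first renormalize: completing the ``$d$-th power'' $y^d(1 + \gamma_2 y^d + \cdots)$ is a unit times $y^d$, so after a coordinate change in $y$ the equation becomes $\tilde y^d + q + f = 0$ up to a unit, reducing us to the model $y^d + q + f = 0$. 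Since $q$ is a nondegenerate quadric (in the odd characteristic-$2$ case with a genuine $x_1^2$ term or the cubic term present), this germ has an isolated singularity at $\msp$ and I would resolve it by a sequence of weighted blow-ups adapted to the ``$y$-weight versus $x$-weight'' structure: first blow up the maximal ideal (or the smooth locus cut by appropriate coordinate subspaces), and repeat, tracking the strict transform and the exceptional divisors. Each blow-up here is of a smooth center lying in the smooth locus of the ambient, so each map is universally $\CH_0$-trivial; composing, $\varphi$ is universally $\CH_0$-trivial. One must check the process terminates with $Y$ smooth — this is where the admissibility hypotheses on $q$ (nondegeneracy / the cubic coefficient) are used to rule out the pathological non-terminating chains, and it is essentially the characteristic-$p$ surgery on a suspension-type singularity.

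The heart of the matter, and the reason the explicit generator $\eta_{\msp}$ was computed in Remark~\ref{rem}, is the second assertion: that $\varphi^*\eta_{\msp}$ extends to a \emph{regular} (not merely rational) section of $\Omega_Y^{n-1}$. Using $\eta_n = (dx_1 \wedge \cdots \wedge dx_{n-1})/(x_n + \cdots)$ as the representative, I would compute, on each chart of each blow-up in the resolution, the pullback of the $(n-1)$-form $dx_1 \wedge \cdots \wedge dx_{n-1}$ and of the denominator $\partial f/\partial x_n = x_n + (\text{higher order})$, and verify that the order of vanishing of the numerator along every exceptional divisor is at least the order of vanishing of the denominator. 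Equivalently, writing $\eta_{\msp}$ via the adjunction formula — it is, up to sign, the Poincaré residue of $(dy \wedge dx_1 \wedge \cdots \wedge dx_n)/F$ restricted to $X$ — I would track the residue through the blow-ups. The discrepancy computation on the ambient together with the multiplicity of $X$ along each center must conspire so that no poles are introduced; this is where one checks that the resolution constructed in the first step is ``good'' in precisely Kollár's sense.

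I expect the main obstacle to be the simultaneous bookkeeping in the case $n$ odd and $p = 2$: there the quadric $q = \beta x_1^2 + x_2x_3 + \cdots + x_{n-1}x_n$ is degenerate as a quadratic form even when $\beta \ne 0$ (since $2x_1 = 0$), so the singularity is not a simple suspension and the blow-up tree branches differently; one must use the cubic term's $x_1^3$-coefficient (or the hypothesis $d > 2 \Rightarrow \beta \ne 0$ coming from $T(s,\msq)$ nonsingular) to get termination and to control the form. I would handle this by treating $x_1$ on a separate footing from the hyperbolic pairs $(x_{2i}, x_{2i+1})$, choosing weighted blow-up weights that are $1$ on the $x_j$ ($j\ge 2$) and $y$, and larger on $x_1$, so that after finitely many steps the $x_1$-direction is resolved into an $A_{d-1}$-type (or node) transverse to a smooth hyperbolic part. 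The residue estimate in that mixed chart is the one computation I would not want to leave implicit, but the principle is the same: numerator order $\ge$ denominator order along every new divisor, which follows because blowing up a point on a smooth $n$-fold raises the order of an $(n-1)$-form by at least $n-2 \ge 1$ while raising the order of the linear-plus-higher denominator by exactly the multiplicity of the center on $X$, which the weights are chosen to dominate.
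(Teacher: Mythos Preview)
Your outline has the right shape---iterated blow-ups, then an order computation for $\eta_{\msp}$---but two of the three load-bearing steps are not actually carried out, and the justification you give for one of them is wrong.

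First, universal $\CH_0$-triviality. The sentence ``each blow-up here is of a smooth center lying in the smooth locus of the ambient, so each map is universally $\CH_0$-trivial'' is not a valid argument: the center is a smooth point of $\mbA^{n+1}$ but a \emph{singular} point of $X$, and it is the fiber of $X_i \to X_{i-1}$ over $\msp_{i-1}$ (the projectivized tangent cone of $X_{i-1}$, not $\mbP^n$) whose $\CH_0$-triviality must be established. The paper does this by explicitly computing that each exceptional divisor is a quadric cone whose vertex is the next blow-up center, so that after the full sequence (plus one extra blow-up at the last vertex) $\varphi^{-1}(\msp)=\bigcup E_i$ is a \emph{chain} of nonsingular rational varieties meeting pairwise in nonsingular quadrics; then Proposition~\ref{propCTP1} and Lemma~\ref{lemCTP2} apply. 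That chain structure---in particular, that each proper transform $(\sigma_i)_*^{-1}E_{i-1}$ misses the next center $\msp_i$---is the content, and your proposal skips it entirely.

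Second, the case $n$ odd, $p=2$, $d>2$. Your plan to use a weighted blow-up with larger weight on $x_1$ is not what is needed, and you give no argument that it terminates or that its exceptional loci are $\CH_0$-trivial. The paper uses only ordinary point blow-ups: after $d/2$ of them the $y$-chart equation reads $1+q+\cdots=0$ and the unique singular point sits at $x_1=1$ (this is exactly where $\beta\ne 0$ from Lemma~\ref{lem:descrsingadd} is used); translating $x_1\mapsto x_1+1$ puts the germ into a new normal form $y^{k}(1+x_1)+q+\cdots$ with $k=d/2$, and Lemmas~\ref{lem:admresol2}--\ref{lem:admresol3} then recursively halve $k$ (even case) or resolve outright (odd case). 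The translation step and the change of normal form are the key moves your sketch does not contain; a single well-chosen weighted blow-up does not substitute for this recursion. For the lifting of $\eta_{\msp}$, your numerator-versus-denominator order idea is correct in spirit; the paper packages it by introducing a class of $(n-1)$-forms (``type theta'') stable under pullback along a point blow-up on the $y$-chart, which turns the verification into a two-line computation rather than a chart-by-chart discrepancy count.
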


We actually prove the following for the resolution $\varphi \colon Y \to X$ in Proposition \ref{prop:resol}: The exceptional divisor $\varphi^{-1} (\msp) = \cup E_i$ is a chain of nonsingular rational varieties.
More precisely, each irreducible component $E_i$ is a nonsingular rational variety, $E_i \cap E_j= \emptyset$ if $|i-j| > 1$ and $E_i \cap E_{i+1}$ is an $(n-2)$-dimensional nonsingular quadric.
Universal $\CH_0$-triviality follows from Proposition \ref{propCTP1} and Lemma \ref{lemCTP2} below.

The resolution will be constructed by successive blowups at each stage of the unique singular point.
The exceptional divisor of the last blowup is either a projective space or a nonsingular quadric.
The other irreducible component $E_i$ is the blowup $\operatorname{Bl}_{v_i} (Q_i)$ of a quadric cone $Q_i$ at its vertex $v_i$ and $E_i$ intersects $E_{i-1}$ along the exceptional divisor of $E_i = \operatorname{Bl}_{v_i} Q_i \to Q_i$.

\begin{Prop}[{\cite[Proposition 1.7]{CTP1}}] \label{propCTP1}
Let $\varphi \colon Y \to X$ be a projective morphism such that for any scheme point $x$ of $X$, the fiber $\varphi^{-1} (x)$, considered as a variety over the residue field $k (x)$, is universally $\CH_0$-trivial.
Then $\varphi$ is universally $\CH_0$-trivial.
\end{Prop}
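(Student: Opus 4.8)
The plan is to build an explicit two-sided inverse of $\varphi_*$ on Chow groups, after first reducing the universal statement to a statement over the base field. I would begin with that reduction, since it is the only place where the full force of \emph{universal} (as opposed to ordinary) $\CH_0$-triviality of the fibers is needed. For $F \supseteq k$ and a scheme point $x_F \in X_F$ lying over $x \in X$, the identity $Y_F = Y \times_X X_F$ yields $\varphi_F^{-1}(x_F) = \varphi^{-1}(x) \times_{k(x)} k(x_F)$. Since $\varphi^{-1}(x)$ is universally $\CH_0$-trivial over $k(x)$ and $k(x_F) \supseteq k(x)$, this base change is again universally $\CH_0$-trivial over $k(x_F)$; hence $\varphi_F$ satisfies the same fiberwise hypothesis. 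It therefore suffices to prove that $\varphi_* \colon \CH_0(Y) \to \CH_0(X)$ is an isomorphism whenever every fiber $\varphi^{-1}(x)$ over a scheme point $x$ is $\CH_0$-trivial over its residue field $k(x)$.

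Next I would introduce the candidate inverse. For each closed point $x \in X$, $\CH_0$-triviality of the fiber over $k(x)$ makes the degree map $\CH_0(\varphi^{-1}(x)) \cong \mbZ$ an isomorphism, so there is a distinguished degree-$1$ class $g_x$. Define $\sigma \colon \CH_0(X) \to \CH_0(Y)$ on generators by $\sigma([x]) = (\iota_x)_* g_x$, where $\iota_x \colon \varphi^{-1}(x) \inj Y$. Then $\varphi_* (\iota_x)_* g_x = (\deg g_x)[x] = [x]$, so $\varphi_* \sigma = \mathrm{id}$ already at the level of cycles; in particular $\varphi_*$ is surjective. The real content is to show that $\sigma$ descends to rational equivalence.

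I expect this descent to be the main obstacle, as it is where rational equivalence on the base must be promoted to the total space. A generating rational equivalence on $X$ has the form $\sum_x n_x[x] = \operatorname{div}_C(g)$ for an integral curve $C \subseteq X$ and $g \in k(C)^*$. Here I would use the fiber over the \emph{generic} point $\eta_C$ of $C$, which is $\CH_0$-trivial over $k(C)$ and hence carries a $0$-cycle of degree $1$; its closure $\Gamma \subseteq Y$ is a $1$-cycle with $\varphi_* \Gamma = [C]$, and $\Gamma \to C$ is generically finite of total degree $1$. Pulling $g$ back along $\Gamma \to C$ and using compatibility of $\operatorname{div}$ with proper pushforward for curves, I obtain a rationally trivial $0$-cycle $\operatorname{div}(\varphi^* g)$ on $Y$ with $\varphi_* \operatorname{div}(\varphi^* g) = \operatorname{div}_C(g) = \sum_x n_x[x]$. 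The difference $\delta := \operatorname{div}(\varphi^* g) - \sum_x n_x (\iota_x)_* g_x$ is then supported on the finite union $\bigsqcup_x \varphi^{-1}(x)$ and satisfies $\varphi_* \delta = 0$. Invoking the zero-dimensional case, where $\varphi_* \colon \CH_0(\varphi^{-1}(x)) \cong \mbZ[x]$ is the (isomorphic) degree map, I conclude $\delta \sim 0$ on $Y$, whence $\sum_x n_x (\iota_x)_* g_x \sim \operatorname{div}(\varphi^* g) \sim 0$. Summing over the curves occurring in a rational equivalence shows $\sigma$ is well defined on $\CH_0(X)$.

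Finally I would verify $\sigma \varphi_* = \mathrm{id}$ on $\CH_0(Y)$: for a closed point $y$ with $x = \varphi(y)$ and $d = [k(y):k(x)]$, both $[y]$ and $d\,g_x$ have degree $d$ in $\CH_0(\varphi^{-1}(x)) \cong \mbZ$, so they agree there, giving $[y] = d\,(\iota_x)_* g_x = \sigma \varphi_*[y]$ in $\CH_0(Y)$. As closed points generate $\CH_0(Y)$, this yields $\sigma \varphi_* = \mathrm{id}$; combined with $\varphi_* \sigma = \mathrm{id}$ it shows $\varphi_*$ is an isomorphism over the base field, and by the first paragraph the same holds after every base change, i.e.\ $\varphi$ is universally $\CH_0$-trivial.
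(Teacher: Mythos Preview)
The paper does not prove this proposition; it is quoted verbatim from \cite[Proposition~1.7]{CTP1} and used as a black box. Your argument is correct and is essentially the proof given in that reference: reduce to the base field by noting that scheme-theoretic fibers are stable under base change, construct a section $\sigma$ of $\varphi_*$ by choosing a degree-$1$ class in each closed fiber, and verify well-definedness by spreading a degree-$1$ zero-cycle on the generic fiber over a curve $C\subset X$ to a $1$-cycle $\Gamma$ dominating $C$, then comparing $\operatorname{div}_\Gamma(\varphi^*g)$ with $\sigma(\operatorname{div}_C g)$ fiber by fiber.

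One small point worth tightening: when you form $\operatorname{div}(\varphi^*g)$ on the $1$-cycle $\Gamma=\sum m_i\Gamma_i$, the components $\Gamma_i$ need not be normal, so strictly speaking one passes to the normalizations $\widetilde{\Gamma}_i\to\Gamma_i\to C$ to take divisors, and the pushforward identity $\varphi_*\operatorname{div}_{\widetilde{\Gamma}_i}(\varphi^*g)=\deg(\Gamma_i/C)\,\operatorname{div}_C(g)$ is the norm formula from Fulton. This does not affect the logic, since rational triviality on $\widetilde{\Gamma}_i$ pushes forward to $Y$ just as well.
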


\begin{Lem}[{\cite[Lemma 2.4]{CTP2}}] \label{lemCTP2}
Let $X = \cup X_i$ be a projective, reduced, geometrically connected variety such that each $X_i$ is universally $\CH_0$-trivial and geometrically irreducible, and each intersection $X_i \cap X_j$ is either empty or has a $0$-cycle of degree $1$.
Then $X$ is universally $\CH_0$-trivial.
\end{Lem}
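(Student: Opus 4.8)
The plan is to verify the definition directly: fix an arbitrary field $F \supseteq k$ and show that the degree map $\deg \colon \CH_0 (X_F) \to \mbZ$ is an isomorphism. First I would observe that all the hypotheses are stable under the base change $k \subseteq F$. Universal $\CH_0$-triviality of each $X_i$ holds by definition over every extension of $k$; a geometrically irreducible $X_i$ stays irreducible over $F$, so the irreducible components of $X_F$ are exactly the $(X_i)_F$; and a $0$-cycle of degree $1$ on $X_i \cap X_j$ pulls back to one on $(X_i \cap X_j)_F$. Surjectivity of $\deg$ is then immediate, since each $X_i$, being universally $\CH_0$-trivial, carries a $0$-cycle of degree $1$ over $F$ whose pushforward to $X_F$ has degree $1$.

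The heart of the matter is injectivity, which I would obtain by pinning down $\CH_0 (X_F)$ completely. Writing $\iota_i \colon (X_i)_F \inj X_F$ for the inclusions, every closed point of $X_F$ lies on some $(X_i)_F$ (as $X$ is reduced and $X_F = \bigcup_i (X_i)_F$), so the total pushforward $\bigoplus_i \iota_{i*} \colon \bigoplus_i \CH_0 ((X_i)_F) \to \CH_0 (X_F)$ is surjective. By hypothesis each $\CH_0 ((X_i)_F) \cong \mbZ$ via the degree map, generated by its unique class $g_i$ of degree $1$. It therefore suffices to show that all the images $\iota_{i*} (g_i)$ coincide in $\CH_0 (X_F)$: if they equal a common class $\gamma$, then $\CH_0 (X_F) = \mbZ \gamma$, and since $\deg (\gamma) = 1$ the relation $n \gamma = 0$ forces $n = \deg (n \gamma) = 0$, whence $\deg$ is an isomorphism.

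To identify the classes, whenever $X_i \cap X_j \ne \emptyset$ I would use a $0$-cycle $w_{ij}$ of degree $1$ supported on $(X_i \cap X_j)_F$. Viewed on $(X_i)_F$ it is a degree-$1$ cycle, hence represents the generator $g_i$, and likewise it represents $g_j$ on $(X_j)_F$; since $w_{ij}$ is supported on the intersection, both $\iota_{i*} (g_i)$ and $\iota_{j*} (g_j)$ equal the class of $w_{ij}$ in $\CH_0 (X_F)$, so $\iota_{i*} (g_i) = \iota_{j*} (g_j)$ for every adjacent pair. Finally, the graph on $\{ X_i \}$ with an edge for each nonempty intersection is connected: a partition with no connecting edges would split $X$ into two disjoint nonempty closed sets, contradicting geometric connectedness. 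Hence all the $\iota_{i*} (g_i)$ agree. I expect the only delicate point to be this gluing step, namely checking that a single degree-$1$ cycle on $X_i \cap X_j$ simultaneously represents the canonical generator of each adjacent component; this is precisely where the hypothesis that $X_i \cap X_j$ carries a $0$-cycle of degree $1$, rather than merely being nonempty, is essential, and it is what lets connectedness propagate the identification across all components.
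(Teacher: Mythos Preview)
Your argument is correct and is the standard one: the surjection $\bigoplus_i \CH_0((X_i)_F) \to \CH_0(X_F)$ together with the identification of all the pushed-forward generators via degree-$1$ cycles on the pairwise intersections, propagated along the connectedness graph, pins down $\CH_0(X_F)$ as $\mbZ$. Note, however, that the paper does not supply its own proof of this lemma; it is quoted verbatim from \cite[Lemma~2.4]{CTP2}, so there is no in-paper argument to compare against. Your write-up is essentially the proof one finds in the cited source.
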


The desired resolution $\varphi \colon Y \to X$ will be constructed by successive blowups at a point.
We explain notation necessary to describe those blowups.

Let $\mbA^{n+1}$ be an affine space with affine coordinates $x_1,\dots,x_n$ and $y$ and $V \subset \mbA^{n+1}$ a hypersurface defined by $f (x_1,\dots,x_n,y) = 0$ passing through the origin $\msp \in \mbA^{n+1}$.
We decompose $f = f_d + f_{> d}$, where $f_d$ and $f_{>d}$ consists of monomials of degree $d$ and at least $d+1$, respectively (with respect to the usual grading $\deg x_i = \deg y = 1$).
Let $\sigma \colon W \to V$ be the blowup of $V$ at $\msp$.
The varieties $W$ are covered by the standard affine charts which we denote by $W_{x_1},\dots,W_{x_n}$ and $W_y$.
Explicitly, the $x_i$-chart and the $y$-chart are described as
\[
\begin{split}
W_{x_i} &= (f (x_1 x_i,\dots,x_{i-1} x_i,x_i,x_{i+1} x_i,\dots,x_n x_i,y x_i)/x_i^d = 0) \subset \mbA^{n+1}, \\
W_y &= (f (x_1,y,\dots,x_n y,y)/y^d = 0) \subset \mbA^{n+1},
\end{split}
\]
The exceptional divisor $E$ is isomorphic to $(f_d = 0) \subset \mbP^n$, where $\mbP^n$ is a projective space with coordinates $x_1,\dots,x_n$ and $y$, and on $W_{x_i}$ and $W_y$, $E$ is defined by $x_i = 0$ and $y = 0$, respectively.
If $\msp \in V$ is a germ of an isolated singularity, then the singular locus of $W$ is entirely contained in $E$, hence, for example, on the $y$-chart $W_y$, $\Sing (W_y)$ is contained in $(y = 0)$.

\subsection{Case: Either $n$ is even or $n$ is odd and $p \ne 2$}

By Lemma \ref{lem:descrsing}, we have
\[
\msp \in X \cong o \in (y^d + q + f = 0) \subset \mbA^{n+1},
\]
where $p \mid d$, $q$ is of the form \eqref{eq:nondegquad} and $f$ is a sum of monomials $x_1^{i_1} \cdots x_n^{i_n}$ with $\sum i_k \ge 3$ and $y^j$ with $j \ge 2 d$.
The existence of the desired resolution $\varphi \colon Y \to X$ is already constructed in \cite[V.5.10 Proposition]{Kol2} and we briefly explain the construction.

Let $\sigma_1 \colon X_1 \to X$ be the blowup at $\msp$.
If $d = 2$ (in this case $p = 2$), then $X_1$ is nonsingular and the $\sigma_1$-exceptional divisor $E_1$ is a nonsingular quadric.
Suppose that $d \ge 3$.
It is easy to see that the $x_i$-chart $(X_1)_{x_i}$ is nonsingular for any $i$ and
\[
(X_1)_y \cong (y^{d-2} + q + \cdots = 0) \subset \mbA^{n+1}.
\]
Here we omit higher degree terms in defining equations simply because these are not necessary to analyze singularities.
We have $\Sing X_1 = \{\msp_1\}$, where $\msp_1$ is the origin of $(X_1)_y$. 
The $\sigma_1$-exceptional divisor $E_1$ is isomorphic to $(q=0) \subset \mbP^n$, which is a quadric cone with vertex $\msp_1$.
Let $\sigma_2 \colon X_2 \to X_1$ be the blowup at $\msp_1$.
We see that $\Sing (X_2) = \{\msp_2\}$, where $\msp_2$ is the origin of the $y$-chart $(X_2)_y$ of the blowup $\sigma_2$ and the $\sigma_2$-exceptional divisor $E_2$ is again a quadric cone with vertex $\msp_2$.
The proper transform $(\sigma_2)_*^{-1} E_1$ is the blowup of $E_1$ at the vertex $\msp_1$, so that it is a nonsingular rational variety.
Moreover the intersection $((\sigma_2)_*^{-1} E_1) \cap E_2$ is the fiber of the blowup $(\sigma_2)_*^{-1} E_1 \to E_1$, which is a nonsingular quadric, and it does not contain the point $\msp_2$ (which will be the center of the next blowup).

Iterating this process, we obtain a resolution $\varphi \colon X_k \to X$ of singularities.
However the exceptional divisor $E_k$ of the last blowup is still a quadric cone with vertex $\msp_k$ while all the other exceptional divisors are nonsingular. 
We blowup $X_k$ once more at $\msp_k$ and let $\varphi \colon Y \to X$ be the resulting morphism.
By construction, $\varphi^{-1} (\msp) = \cup E_i$ is a chain of nonsingular rational varieties and $E_i \cap E_{i+1}$ is a nonsingular quadric.
Then each exceptional divisor of the resulting birational morphism $Y \to X$ is a nonsingular rational variety.
The fact that $\varphi^*\eta_{\msp}$ does not have a pole along any exceptional divisor is proved in \cite[V.5.10]{Kol2}.
Therefore Proposition \ref{prop:resol} is proved in this case.

\subsection{Case: $n$ is odd and $p = 2$}

In this case, $n$ is odd, $p = 2$ and 
\[
\msp \in X \cong (y^d + q + \gamma x_1^3 + c + f = 0) \subset \mbA^{n+1},
\]
where $d, q, c, f$ are as follows:
\begin{itemize}
\item $d$ is even and $\gamma \in \K$ is nonzero.
\item $q = \beta x_1^2 + x_2 x_3 + x_4 x_5 + \cdots + x_{n-1} x_n$, where $\beta \in \K$ is $1$ if $d \ne 2$.
\item $c = c (x_1,\dots,x_n)$ is homogeneous of degree $3$ and it does not contain $x_1^3$.
\item $f = f (x_1,\dots,x_n,y)$ is a sum of monomials $x_1^{i_1} \cdots x_n^{i_n} y^j$ satisfying either $\sum i_k \ge 4$ or $i_1=\cdots=i_n=0$ and $j \ge 2 d$.
\end{itemize}

Note that this follows from Lemmas \ref{lem:descrsing} and \ref{lem:descrsingadd}.

If $d = 2$, then, by \cite[Proposition V.5.2]{Kol2}, the blowup $\sigma \colon X_1 \to X$ at $\msp$ resolves the singularities of $X$, its exceptional divisor $E_1$ is a quadric cone with vertex $\msp_1$, and $\sigma_1^*\eta$ does not have a pole along $E_1$.
If we blowup up $Y \to X_1$ at $\msp_1$, then the proper transform of $E_1$ is a nonsingular variety, hence we obtains the desired resolution.

In the following, we assume $d \ne 2$.

\begin{Def}
Let $h = h (x_1,\dots,x_n,y)$ be a polynomial, and $a,b$ be positive integers.
We say that $h$ is of {\it type} $(a ; b)$ if $h$ is a linear combination of monomials $x_1^{i_1} \cdots x_n^{i_n} y^j$ satisfying either $\sum i_k \ge a$ or $i_1=\cdots =i_n = 0$ and $j \ge b$.

For a polynomial $h = h (x_1,\dots,x_n,y)$, we define
\[
h^* := h (x_1 y, \dots,x_n y,y).
\]
\end{Def}
Note that if $h$ is of type $(a ; b)$, then $h^*$ is also of type $(a ; b)$.

Set $X_0 = X$, $\msp_0 = \msp \in X$ and $f_0 = f$ which is of type $(4;2d)$.

\begin{Lem} \label{lem:admresol1}
For $i = 1,2,\dots,d/2$, there exists a sequence $\sigma_i \colon X_i \to X_{i-1}$ of blowups at a point $\msp_{i-1} \in X_{i-1}$ with the following properties.
\begin{enumerate}
\item For any $i$, $\Sing X_i = \{\msp_i\}$.
\item For $i \ge 1$, the $\sigma_i$-exceptional divisor $E_i$ is a quadric cone with vertex $\msp_i$.
\item For $i = 2,\dots,k$, the proper transform $(\sigma_i)_*^{-1} E_{i-1}$ does not contain $\msp_i$.
\item There is an isomorphism
\[
\msp_{d/2} \in X_{d/2} \cong o \in (y^{d/2} (1+x_1) + q + y^{d/2} (\ell + g) + y^{d/2} h = 0) \subset \mbA^{n+1},
\]
where $\ell$ is a linear combination of $x_2, \dots, x_n$, $g = g (x_1,\dots,x_n)$ is a sum of monomials of degree at least $2$ and $h$ is of type $(2;d/2)$.
\end{enumerate}
\end{Lem}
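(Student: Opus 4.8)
The plan is to prove the four assertions simultaneously by induction on $i$, taking $\sigma_i$ to be the blow-up of $X_{i-1}$ at its unique singular point $\msp_{i-1}$ and carrying along a sharp local normal form. The inductive claim I would set up is that for $0\le i\le d/2-1$ the point $\msp_i$ is the origin of the $y$-chart of $X_i$, and in that chart
\[
X_i\cong(F_i=0),\qquad F_i=y^{d-2i}+q+y^i(\gamma x_1^3+c)+h_i,
\]
where $q,c,\gamma$ are as in the given equation and $h_i$ is of type $(4;\,2d-2i)$ with the extra feature that every monomial of $h_i$ involving some $x_k$ has $y$-degree $\ge 2i$. The passage from $F_{i-1}$ to $F_i$ is obtained on the $y$-chart by the substitution $x_k\mapsto x_ky$ followed by division by $y^2$ (the multiplicity of $F_{i-1}$ at $\msp_{i-1}$, which equals $2$ because $q$ is the lowest-degree part: for $1\le i\le d/2$ one has $d-2(i-1)\ge 2$, with equality only when $i=d/2$). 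The terms $y^{d-2(i-1)},\,q,\,y^{i-1}(\gamma x_1^3+c)$ become $y^{d-2i},\,q,\,y^i(\gamma x_1^3+c)$, and the degree bookkeeping for $h_i$ is routine since $x_k\mapsto x_ky$ raises the $y$-degree of a monomial by its $x$-degree.

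For $1\le i\le d/2-1$ the three listed properties are checked chart by chart. On the $x_1$-chart, $q$ becomes $x_1^2\big(1+Q(x_2,\dots,x_n)\big)$ (with $Q=x_2x_3+\cdots+x_{n-1}x_n$), so after dividing by $x_1^2$ the equation is a unit at the origin and $X_i$ is smooth along $E_i$ there; on each $x_k$-chart with $k\ge 2$, $q$ acquires a linear term (e.g.\ $q/x_2^2=x_1^2+x_3+x_4x_5+\cdots$), so $X_i$ is smooth there; on the $y$-chart, a direct computation of $\nabla F_i$ on $(y=0)$, using $p\mid d$ (so $d-2i$ is a nonvanishing exponent), $\gamma\ne 0$, the absence of $x_1^3$ in $c$, the nondegeneracy of $Q$, and the extra $y$-degree bound on $h_i$, leaves only the origin; hence $\Sing X_i=\{\msp_i\}$. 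Since $d-2(i-1)\ge 4$, the tangent cone of $F_{i-1}$ at $\msp_{i-1}$ is exactly $q$, so $E_i\cong(q=0)\subset\mbP^n$, the cone over the smooth quadric $(q=0)\subset\mbP^{n-1}$ with unique vertex $\msp_i=[0:\cdots:0:1]$; and since $h_{i-1}$ and $y^{i-1}(\gamma x_1^3+c)$ vanish on $(y=0)$, the divisor $E_{i-1}$ near $\msp_{i-1}$ is the cone $(q=0,\,y=0)$, whose strict transform meets $E_i$ along the hyperplane section $(y=0)\subset\mbP^n$, which does not contain $\msp_i$.

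The last blow-up, $i=d/2$, is where characteristic $2$ enters. Here $F_{d/2-1}=y^2+q+y^{d/2-1}(\gamma x_1^3+c)+h_{d/2-1}$, so the tangent cone at $\msp_{d/2-1}$ is $y^2+q=(y+x_1)^2+Q$; in characteristic $2$ this is \emph{not} smooth but is a quadric cone with the single vertex $[1:0:\cdots:0:1]$, so $E_{d/2}$ is again a quadric cone, $\msp_{d/2}$ is a non-origin point of the $y$-chart of the blow-up, and (as before) the strict transform of $E_{d/2-1}$ meets $E_{d/2}$ along $(y=0)$, missing that vertex. On the $y$-chart one divides by $y^2$ and then translates $x_1\mapsto x_1+1$ to centre coordinates at $\msp_{d/2}$: the monomials transform as $y^2\mapsto 1$, $q\mapsto 1+x_1^2+Q$, and $y^{d/2-1}(\gamma x_1^3+c)\mapsto y^{d/2}\big(\gamma(1+x_1)^3+c(1+x_1,x_2,\dots,x_n)\big)$. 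Using $(1+x_1)^3=1+x_1+x_1^2+x_1^3$, rescaling $y$ to normalise $\gamma=1$, and cancelling the constant $1$ from $q$ against the $1$ from $y^2\mapsto 1$, one reads off
\[
F_{d/2}=y^{d/2}(1+x_1)+q+y^{d/2}(\ell+g)+y^{d/2}h,
\]
with $\ell$ the linear-in-$(x_2,\dots,x_n)$ part of $c(1+x_1,\dots)$, $g$ its higher-degree ($y$-free) part together with $x_1^2+x_1^3$, and $y^{d/2}h$ the image of $h_{d/2-1}$.

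The hard part is the monomial bookkeeping for the remainder $h$. One must track how the type-$(4;2d)$ part $f$ evolves under the $d/2$ substitutions $x_k\mapsto x_ky$ with divisions by $y^2$ — it becomes type $(4;d)$ with every monomial of $y$-degree $\ge d$ — and then check that, after the translation $x_1\mapsto x_1+1$ and division by $y^{d/2}$, the outcome is still of type $(2;d/2)$. The only dangerous monomials are those of $x$-degree $1$ and large $y$-degree produced by expanding $(1+x_1)^{i_1}$, and absorbing them cleanly may require a final substitution $y\mapsto yu$ by a unit $u$ (legitimate because $(F_{d/2}-q)/y^{d/2}$ is a unit), possibly with a short case distinction according to the $2$-adic valuation of $d/2$. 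Keeping the intermediate normal forms $F_i$ sharp enough that $\Sing X_i$ stays a single point is the technical core of the argument; the verifications of (2) and (3) are then routine chart computations.
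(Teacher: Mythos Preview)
Your approach is essentially the paper's: iterate the blow-up at the origin of the $y$-chart, track the normal form $F_i=y^{d-2i}+q+y^i(\gamma x_1^3+c)+(\text{remainder})$, check smoothness chart by chart, read off $E_i\cong(q=0)\subset\mbP^n$ for $i<d/2$ and $E_{d/2}\cong(y^2+q=0)$ at the last step, and then translate $x_1\mapsto x_1+1$ and rescale $y$. Your computations of the vertex $[1:0:\cdots:0:1]$ and the verification that the strict transform of $E_{i-1}$ misses $\msp_i$ (because it lies in the hyperplane $(y=0)\subset\mbP^n$, which does not contain the vertex) match the paper's.

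The one place where you diverge is the final paragraph. The ``hard part'' you flag --- forcing the remainder $h$ to be of type $(2;d/2)$ and suggesting a further substitution $y\mapsto yu$ with a possible case split on the $2$-adic valuation of $d/2$ --- is unnecessary, and in fact your proposed fix would not produce that exact type either (a substitution $y\mapsto yu$ does not change the $x$-degree of a monomial, so it cannot eliminate the $x$-degree-$1$ terms coming from expanding $(x_1+1)^{i_1}$). What the paper's proof actually establishes is the normal form
\[
y^{d/2}(1+x_1)+q+y^{d/2}(\ell+g)+y^{d}\,h
\]
with $h$ an \emph{arbitrary} polynomial. This is obtained directly: after the $d/2$ blow-ups the remainder is $y^{d}f_{d/2}$ with $f_{d/2}$ of type $(4;0)$, and after the translation $x_1\mapsto x_1+1$ it is $y^d f_{d/2}(x_1+1,x_2,\dots,x_n,y)$, no further massaging required. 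That is exactly what the subsequent definition of a ``type $k$'' germ (with $k=d/2$) demands, and is all that is used in Lemmas~\ref{lem:admresol2} and~\ref{lem:admresol3}. So drop the last paragraph, stop the bookkeeping at $y^d h$, and you have the paper's proof.
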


\begin{proof}
Let $\sigma_1 \colon X_1 \to X_0$ be the blowup at $\msp_0$.
Clearly the $x_i$-chart $(X_1)_{x_i}$ is nonsingular for $i = 2,\dots,n$, and the $x_1$-chart $(X_1)_{x_1}$ is also nonsingular because of the presence of $x_1^2$ in $q$.
We have
\[
(X_1)_y \cong (y^{d-2} + q + \gamma y x_1^3 + y c + y^2 (f^*/y^4) = 0) \subset \mbA^{n+1}.
\]
We see that $\Sing X_1 = \{\msp_1\}$, where $\msp_1$ is the origin of the $y$-chart $(X_1)_y$, and the $\sigma_1$-exceptional divisor $E_1$ is a quadric cone with vertex $\msp_1$ since $x_1^2 \in q$.
We set $f_1 = f^*/y^4$ which is of type $(4;2d-4)$.

Let $\sigma_2 \colon X_2 \to X_1$ be the blowup at $\msp_1$.
As before, $(X_2)_{x_i}$ is nonsingular for any $i$ and
\[
(X_2)_y \cong (y^{d-4} + q + \gamma y^2 x_1^3 + y^2 c + y^4 (f^*_1/y^4) = 0) \subset \mbA^{n+1}.
\]
We see that $\Sing X_2 = \{\msp_2\}$, where $\msp_2$ is the origin of $(X_2)_y$, and $E_2$ is a quadric cone with vertex $\msp_2$.
We set $f_2 = f^*_1/y^4$ which is of type $(4;2d-8)$. 

We repeat this process $d/2$ times and we obtain a sequence of blowups 
\[
X_{d/2} \xrightarrow{\sigma_{d/2}} X_{d/2 -1} \to \cdots \xrightarrow{\sigma_1} X_0.
\]
We set $f_{d/2} = f^*_{d/2-1}/y^4$ which is of type $(4;0)$.
The variety $X_{d/2}$ is nonsingular along the $x_i$-charts of the blowup $\sigma_{d/2}$ for any $i$ and 
\[
(X_{d/2})_y \cong (1 + q + \gamma y^{d/2} x_1^3 + y^{d/2} c + y^d f_{d/2} = 0) \subset \mbA^{n+1}.
\]
We see that $\Sing X_{d/2} = \{\msp_{d/2}\}$, where $\msp_{d/2} = (1,0,\dots,0)$ on the $y$-chart, and the $\sigma_{d/2}$-exceptional divisor $E_{d/2}$ is a quadric cone with vertex $\msp_{d/2}$.

Now we make a coordinate change $x_1 \mapsto x_1 + 1$, which makes $\msp_{d/2}$ the origin.
After the coordinate change, we have
\[
(X_{d/2})_y \cong (\gamma y^{d/2} (1 + x_1) + q + \cdots = 0) \subset \mbA^{n+1},
\]
where the omitted term in the above equation is
\[
y^{d/2} (\gamma x_1^3 + \gamma x_1^2 + c(x_1+1,x_2,\dots,x_n)) + y^d f_{d/2} (x_1+1,x_2\dots,x_n,y)
\]
and we can write it as $y^{d/2} (\ell + g) + y^d h$, where $\ell$ is a linear combination of $x_2,\dots,x_n$, $g = g (x_1,\dots,x_n)$ is a sum of monomials of degree at least $2$ and $h = h (x_1,\dots,x_n,y)$.
By scaling $y$, we have the desired description of $\msp_{d/2} \in X_{d/2}$.

Finally we check (2).
On the $y$-chart $(X_{i-1})_y$ of the blowup $\sigma_{i-1}$, the exceptional divisor $E_{i-1}$ is defined by $y = 0$.
Since the singular point $\msp_i \in X_i$ is contained in the $y$-chart of $\sigma_i$, we verify (2).
\end{proof}

We need to construct a resolution of $\msp_{d/2} \in X_{d/2}$.

\begin{Def}
Let $k$ be a positive integer.
We say that a germ $\msq \in V$ of an $n$-dimensional isolated hypersurface singularity is of {\it type} $k$ if 
\[
\msq \in V \cong o \in (y^k (1 + x_1) + q + y^k (\ell + g) + y^{2k} h = 0) \subset \mbA^{n+1},
\]
where $\ell$ is a linear combination of $x_2,\dots,x_n$, $g$ is a sum of monomials $x_1^{i_1} \cdots x_n^{i_n} y^j$ satisfying $i_1+\cdots+i_n \ge 2$ and $h = h (x_1,\dots,x_n,y)$.
\end{Def}

\begin{Lem} \label{lem:admresol2}
Let $\msq_0 \in V_0$ be a germ of type $2k$ with $k \ge 1$.
Then, for $i = 1,2,\dots,k$, there is a sequence $\tau_i \colon V_i \to V_{i-1}$ of blowups at a point $\msq_{i-1} \in V_{i-1}$ with the following properties.
\begin{enumerate}
\item For any $0 \le i \le k-1$, $\Sing V_i = \{\msq_i\}$.
\item If $k \ge 2$, then $\Sing V_k = \{\msq_k\}$, and if $k = 1$, then $V_k$ is nonsingular.
\item For $1 \le i \le k$, the $\tau_i$-exceptional divisor $F_i$ is a quadric cone with vertex $\msq_i$.
\item For $2 \le i \le k$, the proper transform $(\tau_i)_*^{-1} F_{i-1}$ does not contain $\msq_i$.
\item If $k \ge 2$, then the germ $\msq_k \in V_k$ is of type $k$.
\end{enumerate}
\end{Lem}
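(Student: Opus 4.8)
The plan is to realize the $\tau_i$ as blowups of points and to track the strict transform of $V_0$ in one distinguished affine chart at each stage, making systematic use of characteristic $2$. Write $V_0$ in its normal form as a germ of type $2k$,
\[
V_0\cong(y^{2k}(1+x_1)+q+y^{2k}(\ell+g)+y^{4k}h=0)\subset\mbA^{n+1},
\]
with $q=x_1^2+x_2x_3+x_4x_5+\cdots+x_{n-1}x_n$. Since $n$ is odd and $\K$ has characteristic $2$, one checks that both $q$ and $y^2+q=(x_1+y)^2+x_2x_3+\cdots+x_{n-1}x_n$ are quadratic forms of corank one, so the quadrics $(q=0)$ and $(y^2+q=0)$ in $\mbP^n$ are irreducible quadric cones, each with a single singular point --- the vertex; the vertex of $(q=0)$ is the origin of the ``$y$-chart'' of the blowup at the origin, and that of $(y^2+q=0)$ is the point $(1,0,\dots,0)$ of the $y$-chart. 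I let $\msq_0$ be the unique singular point of $V_0$ and, inductively, $\msq_i$ be the vertex of the exceptional divisor $F_i$ of $\tau_i$, so that each $\tau_i$ blows up the origin of the $y$-chart of $\tau_{i-1}$ (with $\msq_0$ itself taken as the origin).

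First I would perform $\tau_1,\dots,\tau_{k-1}$ (no blowup when $k=1$). A direct substitution shows that in the $y$-chart of $\tau_i$ (for $1\le i\le k-1$) the strict transform is
\[
y^{2k-2i}(1+x_1y^{i})+q+y^{2k-i}\ell+y^{2k-2i}g_i+y^{4k-2i}h_i=0,
\]
where $g_i$ and $h_i$ arise from $g$ and $h$ by $i$ applications of the substitution $x_j\mapsto x_jy$; in particular $g_i$ is still a sum of monomials of $x$-degree at least $2$ and $h_i$ is arbitrary. The ``pure-power'' exponent $2k-2i$ drops by $2$ and the ``$x_1$'' exponent by $1$ at each step, so for $i\le k-1$ the tangent cone at $\msq_i$ stays $(q=0)$ and its vertex, the next center, stays at the origin; moreover a check of the $x_j$-charts --- each of whose equations contains a linear coordinate coming from the off-diagonal part $x_2x_3+x_4x_5+\cdots$ of $q$ --- shows $V_i$ is smooth away from $\msq_i$. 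This yields $\Sing V_i=\{\msq_i\}$ and that $F_i$ is the quadric cone $(q=0)$ with vertex $\msq_i$. At stage $i=k-1$ the equation has become $y^2+x_1y^{k+1}+q+\cdots=0$, whose tangent cone at $\msq_{k-1}$ is $(y^2+q=0)$, with vertex the point $\msq_k=(1,0,\dots,0)$ of the $y$-chart of $\tau_k$.

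Next I would analyze the last blowup $\tau_k$. In its $y$-chart the strict transform is
\[
1+x_1y^{k}+q+y^{k}\ell+g_k+y^{2k}h_k=0,
\]
and the decisive point is that $g_k$, being $g$ after $k$ substitutions $x_j\mapsto x_jy$, has every monomial of $y$-degree at least $k$ times its $x$-degree, hence at least $2k$; so $g_k$ is divisible by $y^{2k}$. After the coordinate change $x_1\mapsto x_1+1$ that centers $\msq_k$, the identity $(x_1+1)^2=x_1^2+1$ sends $1+q$ to $q$ and $x_1y^k$ to $x_1y^k+y^k$, while $g_k$ and $y^{2k}h_k$ combine into a single term $y^{2k}h'$; thus the equation becomes
\[
y^k(1+x_1)+q+y^k\ell+y^{2k}h'=0,
\]
which is exactly a germ of type $k$ (with vanishing $g'$-part). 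This gives property (5), and the standard argument --- isomorphism away from $F_k$, together with a chart check along it --- gives $\Sing V_k=\{\msq_k\}$, which is property (2) when $k\ge 2$. When $k=1$ the same equation reads $y(1+x_1)+q+\cdots=0$, whose linear term $y$ shows $V_1$ is smooth, giving property (2) when $k=1$. Property (3) holds because each $F_i$ has been identified as the quadric cone $(q=0)$ or $(y^2+q=0)$ with vertex $\msq_i$, and property (4) holds because the strict transform of $F_{i-1}$, cut out by $y=0$ in the $y$-chart of $\tau_{i-1}$, meets only the $x_j$-charts of $\tau_i$, whereas $\msq_i$ always lies in the $y$-chart of $\tau_i$.

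The hard part will be the bookkeeping behind the two displayed normal forms: one must verify that the ``correction'' terms $y^{2k}(\ell+g)$ and $y^{4k}h$ retain their prescribed shapes under the successive substitutions, and --- crucially --- that after all $k$ blowups the $g$-part has acquired enough factors of $y$ to be reabsorbed into the type-$k$ normal form, which is exactly what keeps $\msq_k\in V_k$ from being a worse singularity. Closely tied to this is the need to run the characteristic-$2$ computations carefully, both to locate the vertices of the tangent quadrics correctly (via $(x_1+y)^2=x_1^2+y^2$) and, using that derivatives of squares vanish, to check that each $V_i$ is nonsingular away from the chosen center.
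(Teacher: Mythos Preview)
Your proof is correct and follows essentially the same route as the paper: successive point blowups tracked in the $y$-chart, with the final translation $x_1\mapsto x_1+1$ (using $(x_1+1)^2=x_1^2+1$ in characteristic $2$) to exhibit the type-$k$ normal form; your bookkeeping for $g_i$ differs only in that you defer the division by $y^{2}$ to the very end, which is harmless. Two small slips that do not affect the argument: the range where the tangent cone at $\msq_i$ is $(q=0)$ should be $i\le k-2$ (as you yourself note two sentences later, at $i=k-1$ it is $(y^2+q=0)$), and the $x_1$-chart is smooth along the exceptional locus because $x_1^2\in q$ contributes a constant term $1$ rather than a linear coordinate.
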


\begin{proof}
Let $\tau_1 \colon V_1 \to V_0$ be the blowup at $\msq_0$.
We see that $(V_1)_{x_i}$ is nonsingular for any $i$ and
\[
(V_1)_y = (y^{2k-2} (1 + y x_1) + q + y^{2k-1} \ell + y^{2k} g_1 + y^{4k-2} h_1 = 0) \subset \mbA^{n+1},
\]
where $g_1 = g^*/y^2$ and $h_1 = h^*$.
Then $\Sing (V_1) = \{\msq_1\}$, where $\msq_1$ is the origin of $(V_1)_y$, and the $\tau_1$-exceptional divisor $F_1$ is a projective quadric cone with vertex $\msq_1$.

We repeat this procedure successively: 
After $(\tau_i,V_i,\msq_i,F_i,g_i,h_i)$ is constructed, $\tau_{i+1} \colon V_{i+1} \to V_i$ is the blowup at $\msq_i$, $\msq_{i+1}$ is the origin of the $y$-chart, $F_{i+1}$ is the $\tau_{i+1}$-exceptional divisor and $g_{i+1} = g_i^*/y^2$, $h_{i+1} = h^*$. 

At the $k$th stage, we have
\[
(V_{k})_y \cong (1+y^{k} x_1 + q + y^{k} \ell + y^{2 k} g_k + y^{2 k} h_k = 0) \subset \mbA^{n+1}.
\]
Let $\msq_k$ be the point $(1,0,\dots,0)$ of the $y$-chart $(V_k)_y$.
The $\tau_k$-exceptional divisor $F_k$ is a quadric cone with vertex $\msq_k$.
We see that $\Sing V_k \subset \{\msq_k\}$, and $V_k$ is nonsingular if and only if $k = 1$.

We replace $x_1$ with $x_1+1$, which makes $\msq_k$ the origin of the $y$-chart.
Then we have
\[
(V_k)_y \cong (y^k (1+x_1) + q + y^k \ell + y^{2k} h' = 0) \subset \mbA^{n+1},
\]
for some $h' \in \K [x_1,\dots,x_n,y]$.
This shows that $\msq_k \in V_k$ is a germ of type $k/2$. 
Finally, (4) follows since we keep working on the $y$-charts.
\end{proof}

\begin{Lem} \label{lem:admresol3}
Let $\msq_0 \in V_0$ be a germ of type $2k+1$ with $k \ge 1$.
Then, for $i = 1,2,\dots,k$, there is a sequence $\tau_i \colon V_i \to V_{i-1}$ of blowups at a point $\msq_{i-1} \in V_{i-1}$ with the following properties.
\begin{enumerate}
\item For any $0 \le i \le k-1$, $\Sing V_i = \{\msq_i\}$ and $V_k$ is nonsingular.
\item For $1 \le i \le k$, the $\tau_i$-exceptional divisor $F_i$ is a quadric cone with vertex $\msq_i$.
\item For $2 \le i \le k$, the proper transform $(\tau_i)_*^{-1} F_{i-1}$ does not contain $\msq_i$.
\end{enumerate}
\end{Lem}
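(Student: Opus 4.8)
The plan is to follow the proof of Lemma \ref{lem:admresol2} almost verbatim. The one structural difference is that here the exponent of $y$ in the leading term of the equation is \emph{odd}, so the sequence of blowups terminates one step sooner — producing a nonsingular variety rather than a germ of smaller type — which is exactly why the present statement carries no analogue of Lemma \ref{lem:admresol2}(5).

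Writing out the hypothesis,
\[
\msq_0 \in V_0 \cong o \in (y^{2k+1}(1+x_1) + q + y^{2k+1}(\ell+g) + y^{2(2k+1)}h = 0) \subset \mbA^{n+1},
\]
where $q = x_1^2 + x_2 x_3 + \cdots + x_{n-1}x_n$ (recall that, as $d \ne 2$, the coefficient of $x_1^2$ in $q$ is nonzero by Lemma \ref{lem:descrsingadd}) and every term other than $q$ is divisible by $y$. I would take $\tau_1 \colon V_1 \to V_0$ to be the blowup at $\msq_0$; exactly as in Lemma \ref{lem:admresol2}, the charts $(V_1)_{x_j}$ are nonsingular (for $j \ge 2$ the monomial of $q$ involving $x_j$ becomes linear, and for $j = 1$ the term $x_1^2$ becomes a nonzero constant), so $\Sing V_1$ lies in the $y$-chart. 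Continuing in the same way — taking $\tau_{i+1} \colon V_{i+1} \to V_i$ to be the blowup at the origin $\msq_i$ of the $y$-chart of $V_i$ — an induction on $i$, which is the same bookkeeping as in Lemma \ref{lem:admresol2} with the exponent of $y$ shifted by one, should yield
\[
(V_i)_y \cong (y^{2(k-i)+1}(1 + x_1 y^i) + q + \cdots = 0) \subset \mbA^{n+1},
\]
with $q$ reproduced unchanged and all omitted terms divisible by $y$. From this description one reads off, for $i \le k-1$: the multiplicity of $V_i$ at $\msq_i$ is $2$, so $\Sing V_i = \{\msq_i\}$; the $\tau_i$-exceptional divisor $F_i$ is the projectivized tangent cone $(q = 0) \subset \mbP^n$, a quadric cone whose vertex $\msq_i$ is the $y$-point (as $q$ is independent of $y$); and $(\tau_i)_*^{-1} F_{i-1}$, being the hyperplane $\{y = 0\}$ in the relevant chart, avoids $\msq_i$. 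This gives (2), (3), and the part of (1) concerning $V_0,\dots,V_{k-1}$.

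For the last stage, the leading exponent of $y$ has dropped to $2(k-k)+1 = 1$, so
\[
(V_k)_y \cong (y(1 + x_1 y^k) + q + \cdots = 0) \subset \mbA^{n+1}
\]
has multiplicity $1$ at the origin; hence $V_k$ is smooth along the $y$-chart, and combining this with the nonsingularity of the $x_j$-charts of $\tau_k$ and with $\Sing V_k \subseteq F_k$ (valid because $V_{k-1}$ has an isolated singularity) shows that $V_k$ is nonsingular, completing (1). Note that, unlike in Lemma \ref{lem:admresol2}, no translation of the singular point to $(1,0,\dots,0)$ and no rescaling of $y$ are needed here.

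The step that requires genuine care — and where I expect the bulk of the work to lie — is the inductive check that the shape of the defining equation is preserved by each blowup: that $q$, together with its $x_1^2$ term, always reappears unchanged (so that the $F_i$ are genuinely quadric cones, and so that $\Sing V_i$ stays isolated), and that the remaining terms stay divisible by a sufficiently high power of $y$ not to reduce the multiplicity. This is routine, being the computation in the proof of Lemma \ref{lem:admresol2} run with the parity of the $y$-exponent reversed, and I do not anticipate any essentially new difficulty.
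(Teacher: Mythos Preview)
Your approach is correct and essentially identical to the paper's own proof: both repeat the blowing-up procedure of Lemma~\ref{lem:admresol2} with $2k$ replaced by $2k+1$, arriving at a $y$-chart equation $y(1+y^k x_1)+q+\cdots=0$ whose linear $y$-term forces $V_k$ to be nonsingular along the exceptional locus. The paper's write-up is even terser than yours, explicitly referring back to Lemma~\ref{lem:admresol2} for all verifications except the final smoothness check.
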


\begin{proof}
We repeat the same blowing-up procedure as in the proof of Lemma \ref{lem:admresol2} replacing $2k$ by $2k+1$.
Let $\tau_i \colon V_i \to V_{i-1}$, $i = 1,2,\dots,k$ be the sequence of blowups at each singular point $\msq_{i-1} \in V_{i-1}$.
For the $y$-chart $(V_k)_y$ of the blowup $\tau_k$, we have
\[
(V_k)_y \cong y (1 + y^k x_1) + q + y^{k+1} \ell + y^{2k+1} g_k + y^{2k+2} h_k = 0) \subset \mbA^{n+1}.
\]
Since the singular locus of $(V_k)_y$ is contained in the exceptional divisor $(y = 0)$ of $\tau_k$, it is easy to see that $V_k$ is nonsingular.
The other assertions follows from the arguments in the proof of Lemma \ref{lem:admresol2}.
\end{proof}

We can conclude the existence of a resolution of $\msp \in X$.
We have a sequence $X_{d/2} \to \cdots \to X_0 = X$ of blowups.
We write $d/2 = 2^a d'$, where $a \ge 0$ and $d' \ge 1$ is odd. 
We apply Lemma \ref{lem:admresol2} recursively ($a$ times, to be precise) and we obtain a sequence $V_l \to \cdots \to V_0 = X_{d/2}$ of blowups.
If $d' = 1$, then $V_l$ is nonsingular, and if $d' > 1$, then $V_l$ has a unique singular point $\msq_l$ such that $\msq_l \in V_l$ is of type $d'$.
In the former case we set $V = V_l$.
In the latter case let $V \to \cdots \to V_l$ be a sequence of blowups given in Lemma \ref{lem:admresol3}.
Note that each exceptional divisor is a quadric cone and each blowup center is the vertex of the cone, so that the singularity of each exceptional divisor is also resolved the exceptional divisor, denoted by $F$, of the last blowup.
Let $Y \to V$ be the blowup at the vertex of $F$.
Then each exceptional divisor of the resulting morphism $\varphi \colon Y \to X$ is a nonsingular rational variety.
The fact that $\varphi^{-1} (\msp)$ is a chain and the intersections are nonsingular quadric follows from of Lemma \ref{lem:admresol1}.(3), Lemma \ref{lem:admresol2}.(4) and Lemma \ref{lem:admresol3}.(3).

It remains to check the lifting property of $\eta = \eta_{\msp}$.
As in Remark \ref{rem},
\[
\eta = \frac{d x_1 \wedge \cdots \wedge d x_{n-1}}{x_{n-1} + \cdots}.
\]
Note that the omitted terms consists of monomials of degree at least $2$ and we will omit the corresponding term after pulling back to the blowups of $X$.
In order to show that $\varphi^*\eta$ is a regular section of $\Omega_Y^{n-1}$, we introduce the following notion.

\begin{Def}
We define
\[
\theta_i = x_i \frac{d x_1 \wedge \cdots \wedge \widehat{d x_i} \wedge \cdots \wedge d x_{n-1} \wedge dy}{x_{n-1} + \cdots},
\]
for $i = 1,\dots,n$ and
\[
\theta_y = y \frac{d x_1 \wedge \cdots \wedge d x_{n-1}}{x_{n-1} + \cdots}.
\]
We say that a rational $(n-1)$-form $\omega$ is of type {\it theta} if it can be written as
\[
\omega = f_1 \theta_1 + \cdots + f_{n-1} \theta_{n-1} + g \theta_y,
\]
for some $f_1,\dots,f_{n-1},g \in \K [x_1,\dots,x_n,y]$. 
\end{Def}

Let $\sigma_1 \colon X_1 \to X$ be the first blowup.
On the $y$-chart $(X_1)_y$, we have
\[
\sigma_1^* \eta = y^{n-3} (\theta_1 + \cdots + \theta_{n-1} + \theta_y),
\]
and it is of type theta since $n \ge 3$.
Hence $\sigma^*_1 \eta$ does not have a pole along $E_1$ because $x_{n-1} + \cdots$ does not vanish entirely along $E_1$.
Let $\sigma$ be the blowup of $\mbA^{n+1}$ at the origin.
Then, on the $y$-chart of the blowup, we have
\[
\sigma^*\theta_i = y^{n-3} \theta_i, \text{ and } \sigma^*\theta_y = y^{n-2} (\theta_1+\cdots+\theta_{n-1}+\theta_y).
\]
This means that, at each blowup, the pullback of an $(n-1)$-form of type theta is again type theta.
Therefore $\varphi^*\eta$ is of type theta, which implies that $\varphi^*\eta$ is a regular section of $\Omega_Y^{n-1}$, completing the proof of Proposition \ref{prop:resol}.

\section{Proof of Main Theorems} \label{sec:proof}

\begin{proof}[Proof of \emph{Theorems \ref{mainthm}} and \emph{\ref{mainthmgen}}]
Let $Z$ be a very general complete intersection of type $(d_1,\dots,d_r)$ in $\mbP^{n+r}$.
We understand that $Z = \mbP^n$ when $r = 0$.
Moreover we assume $d_i \ge 2$ for any $i$ when $r > 0$.
By the assumptions in Theorems \ref{mainthm} and \ref{mainthmgen}, we have $d + d_1 + \cdots + d_r \ge n+r+1$.
Let $X$ be a cyclic cover of $Z$ branched along a very general divisor $D$ of degree $d$.
Let $m$ be the covering degree of $\pi \colon X \to Z$.
We choose and fix a prime number $p$ dividing $m$.
By Theorem \ref{thm:specialization}, it is enough to show that, over an algebraically closed field $\K$ of characteristic $p$, $X$ admits a universally $\CH_0$-trivial resolution $\varphi \colon Y \to X$ such that $Y$ is not universally $\CH_0$-trivial.

We see that $X \cong Z [\sqrt[m]{f}]$, where $f \in H^0 (Z,\mcO_Z (d))$ is the section defining $D$.
If $d > 2$, then the restriction map
\[
H^0 (\mbP^n,\mcO_Z (d)) \to \mcO_Z (d) \otimes (\mcO_{Z,\msq}/\mfm_{\msq}^4)
\]
is surjective for any point $\msq \in Z$.
Note that, in case $r = 0$, the inequality $d > 2$ is automatically satisfied since $d \ge n+1$ and $n \ge 3$.
It follows from \cite[V.5]{Kol2} and Lemma \ref{lem:admcr}, a general $f \in H^0 (\mbP^n, \mcO_{\mbP^n})$ has only admissible critical points.
It remains to consider the case when $r > 0$ and $d = 2$.
However, the same conclusion holds from \cite[Lemma 7.6]{CL} in this case as well.

Let $\varphi \colon Y \to X$ be the universally $\CH_0$-trivial resolution obtained in  Proposition \ref{prop:resol}.
Then $\varphi^* \mcM \inj \Omega_Y^{n-1}$, where $\mcM = \pi^* \mcQ (\mcO_Z (d),f)$.
We have $H^0 (Y,\Omega_Y^{n-1}) \ne 0$ since 
\[
\mcM \cong \pi^*(\omega_Z \otimes \mcO_{\mbP^n} (d)) \cong \mcO_X (d + d_1 + \cdots + d_r -n - r - 1)
\] 
and $d + d_1 + \cdots + d_r \ge n+r+1$.
By Theorem \ref{thm:totaro}, $Y$ is not universally $\CH_0$-trivial, and the non-stable-rationality is proved.

We prove the latter part.
Suppose that $d + d_1 + \cdots + d_r \ge n+r+2$ and the covering degree $m$ is not a power of $2$.
Then the line bundle $\varphi^*\mcM \subset \Omega_Y^{n-1}$ is big and we can choose $p \ne 2$ such that $p \mid m$.
Thus the assertion follows from \cite{Kol2} (see the proof of V.5.14 Theorem).
\end{proof}

%

\begin{proof}[Proof of \emph{Theorem \ref{mainthm2}}]
Let $x_0,\dots,x_n,y$ and $z$ be the homogeneous coordinates of $\mbP (1^n,a,b)$ of weight $1,\dots,a$ and $b$, respectively.
Let $n,a,b,d$ be as in Theorem \ref{mainthm2} and we set $l = d/a$, $m = d/b$.
We see that the weighted hypersurface $X$, defined over $\mbC$, is indeed nonsingular by \cite[8.1 Theorem]{IF}, since $a$, $b$ are coprime and $a \mid d$, $b \mid d$.
Moreover it is Fano since $\omega_X \cong \mcO_X (d - n - a - b)$ is anti-ample.

By the assumption (2), we can choose a prime number $p$ which divides $m$ but not $l$.
Let
\[
X = (z^m + f (x_0,\dots,x_n,y) = 0) \subset \mbP (1^n,a,b)
\]
be a weighted hypersurface of degree $d$ defined over an algebraically closed field $\K$ of characteristic $p$.
We set $Z = \mbP (1^n,a)$ and let $Z^{\circ} = Z \setminus \{(0\!:\!\cdots\!:\!0\!:\!1)\}$ be the nonsingular locus of $Z$.
Let $\pi \colon X \to Z$ be the projection, and set $X^{\circ} := \pi^{-1} (Z^{\circ})$ and $\pi^{\circ} = \pi|_{X^{\circ}}$.
Then $X^{\circ} = Z^{\circ} [\sqrt[m]{f}]$.
If $m = d/b \ne 2$, then we can assume $p \ne 2$.
In this case the restriction map
\[
H^0 (Z, \mcO_Z (d)) \to \mcO_Z (d) \otimes (\mcO_{Z,\msq}/\mfm_{\msq}^3)
\]
is surjective for any $\msq \in Z^{\circ}$ since $d/a \ge 2$.
If $m = d/b = 2$, then $p = 2$ and 
\[
H^0 (Z, \mcO_Z (d)) \to \mcO_Z (d) \otimes (\mcO_{Z,\msq}/\mfm_{\msq}^4)
\]
is surjective for any $\msq \in Z^{\circ}$ since $d/a \ge 3$ (Note that the inequality $d/a \ge 3$ follows from the assumption (3)).
Thus $f$ has only admissible critical points on $Z^{\circ}$.
Since we can assume that $f$ contains $y^l$ and $p \nmid l$, we see that $X$ is nonsingular at any point of the finite set $X \setminus X^{\circ}$.
We set $\mcM = \iota_* \mcM^{\circ}$, where $\mcM^{\circ} = {\pi^{\circ}}^* \mcQ (\mcO_{Z^{\circ}} (d), f)$.
Note that $\mcM$ is a line bundle.
By Proposition \ref{prop:resol}, there is a universally $\CH_0$-trivial resolution $\varphi \colon Y \to X$ such that $\varphi^*\mcM \inj \Omega_Y^{n-1}$.
We have 
\[
\mcM \cong \pi^* (\omega_Z \otimes \mcO_Z (d)) \cong \mcO_X (d - n - a)
\] 
and $H^0 (X,\mcM) \ne 0$ since $d \ge d-n-a$.
Thus a very general $X$, defined over $\mbC$, is not stably rational.
\end{proof}

\begin{proof}[Proof of \emph{Corollary \ref{maincor2}}]
The assertion for $X_5 \subset \mbP^5$ follows from \cite{Totaro}.
Since $X_6 \subset \mbP (1^5,2)$ and $X_8 \subset \mbP (1^5,4)$ are triple and double covers of $\mbP^4$ branched along a divisor of degree $6$ and $8$, respectively, they are not stably rational by Theorem \ref{mainthm} or \cite{CTP2}.
Finally the assertion for $X_{10} \subset \mbP (1^4,2,5)$ follows from Theorem \ref{mainthm2}.
\end{proof}


\begin{thebibliography}{99}
\bibitem{Beau}
A.~Beauville,
A very general sextic double solid is not stable rational,
Bull. Lond. Math. Soc. {\bf 48} (2016), no.~2, 321--324.

\bibitem{BK}
G.~Brown and A.~Kasprzyk,
Four-dimensional projective orbifold hypersurfaces,
Experiment. Math. {\bf 25} (2016), no.~2, 176--193.

\bibitem{CL}
A.~Chatzistamatiou and M.~Levine,
Torsion orders of complete intersections,
arXiv:1605.01913.

\bibitem{Chel1}
I.~Cheltsov,
Birationally super-rigid cyclic triple spaces,
Izv. Math. {\bf 68} (2004), no.~6, 1229--1275.

\bibitem{Chel2}
I.~Cheltsov,
Double spaces with singularities,
Sb. Math. {\bf 199} (2008), no.~1-2, 291--306.

\bibitem{CTP1}
J.-L.~Colliot-Th\'el\`ene and A.~Pirutka,
Hypersurfaces quartiques de dimension 3: non rationalit\'e stable, 
Ann. Sci. \'Ec. Norm. Sup\'er. (4) {\bf 49} (2016), no.~2, 371--397. 

\bibitem{CTP2}
J.-L.~Colliot-Th\'el\`ene and A.~Pirutka,
Rev\^etements cycliques qui ne sont pas stablement rationnels,
arXiv:1506.00420, to appear in Izvestiya RAN, Ser. Math.

\bibitem{HT}
B.~Hassett and Y.~Tschinkel,
On stable rationality of Fano threefolds and del Pezzo fibration,
arXiv:1601.07074.

\bibitem{HKT}
B.~Hassett, A.~Kresch and Y.~Tschinkel,
Stable rationality and conic bundles,
arXiv:1503.08497, to appear in Math. Ann.

\bibitem{HPT}
B.~Hassett, A.~Pirutka and Y.~Tschinkel,
Stable rationality of quadric surface bundles over surfaces,
arXiv:1603.09262.

\bibitem{IF}
A.~R.~Iano-Fletcher,
Working with weighted complete intersections,
{\it Explicit birational geometry of $3$-folds}, London Math. Soc. Lecture note Ser. {\bf 281}, Cambridge Univ. Press, Cambridge, 2000.

\bibitem{Kol1}
J.~Koll\'ar,
Nonrational hypersurfaces,
J. Amer. Math. Soc. {\bf 8} (1995), no.~1, 241--249.

\bibitem{Kol2}
J.~Koll\'ar,
{\it Rational curves on algebraic varieties},
Ergebnisse der Mathematik und ihrer Grenzgebiete, {\bf 32}, Springer-Verlag, Berlin, 1996.

\bibitem{Pir}
A.~Pirutka,
Varieties that are not stably rational, zero-cycles and unramified cohomology,
arXiv:1603.09261.

\bibitem{Puk}
A.~V.~Pukhlikov,
Birational automorphisms of double spaces with singularities,
J. Math. Sci. (New York) {\bf 85} (1997), no.~4, 2128--2141.


\bibitem{Totaro}
B.~Totaro,
Hypersurfaces that are not stably rational,
J. Amer. Math. Soc. {\bf 29} (2016), no.~3, 883--891.

\bibitem{Voisin}
C.~Voisin,
Unirational threefolds with no universal codimension 2 cycle,
Invent. Math. {\bf 201} (2015), no.~1, 207--237.
\end{thebibliography}
\end{document}